\newtheorem{thm}{Theorem}[section]
\newtheorem{prop}[thm]{Proposition}
\theoremstyle{definition}
\newtheorem{defn}[thm]{Definition}
\theoremstyle{remark}
\newtheorem{rem}[thm]{Remark}
\numberwithin{equation}{section}
\newcommand{\RR}{\mathbb{R}}
\newcommand{\Circle}{\mathbb{S}^{1}}
\newcommand{\DiffS}{\mathrm{Diff}_{+}^{\infty}(\mathbb{S}^{1})}
\newcommand{\DM}[1]{\mathcal{D}^{#1}(M)}
\newcommand{\D}[1]{\mathcal{D}^{#1}(\mathbb{S}^{1})}
\newcommand{\DiffM}{\mathrm{Diff}^{\infty}(M)}
\newcommand{\CS}{\mathrm{C}^{\infty}(\mathbb{S}^{1})}
\newcommand{\cinf}{c^{\infty}}
 \newcommand{\widebar}[1]{\mkern 1.5mu\overline{\mkern-1.5mu#1\mkern-1.5mu}\mkern 1.5mu}
\newcommand{\set}[1]{\left\{#1\right\}}
\newcommand{\f}{\varphi} %
\newcommand{\p}{\psi}
\DeclareMathOperator{\ad}{ad}
\DeclareMathOperator{\Ad}{Ad} %
\newcommand{\E}{\mathbb{E}}
\newcommand{\F}{\mathbb{F}}
\newcommand{\Cinf}{\mathrm{C}^{\infty}}
\title[A Spray Theory for the Geometric Method ] %Use the shortened version of the full title
      {A Spray Theory for the Geometric Method   in Hydrodynamics}
\author[Emanuel-Ciprian Cismas ]{}
\subjclass{Primary: 22E65; Secondary: 58D05, 58B25, 35Q35.}
 \keywords{Euler-Poincar\'e equations; diffeomorphism group of the circle;}
 \email{emanuel.cismas@upt.ro}
\thanks{The first author is supported by NSF grant xx-xxxx}
\thanks{$^*$ Corresponding author: xxxx}
\begin{document}
\maketitle

% Enter the first author's name and address:
\centerline{\scshape Emanuel-Ciprian Cismas $^*$}
\medskip
{\footnotesize
% please put the address of the first author
 \centerline{Universitatea "Politehnica", }
 \centerline{Timisoara, Piata Victoriei 2,  300006, Romania}
} % Do not forget to end the {\footnotesize by the sign }

\medskip

%The abstract of your paper
\begin{abstract} We present a brief spray theory necessary  for the geometric method in hydrodynamics and shape analysis. We make use of the convenient calculus to complete a unitary approach started  by P. Michor and A. Kriegl.

\end{abstract}

\section{Introduction}
\label{sec:introduction}

% ----------------------------------------------------------------
\space The main goal of this article is to fix the lack o rigorousness in  a few papers involving infinite dimensional Lie groups. For infinite dimensional Lie groups we have to do the things the right way:  the  "convenient way".

 We will study infinite dimensional manifolds modelled on locally convex spaces. As is well-known, beyond Banach spaces a lot of pathologies occur: ordinary differential equations may not have solutions or  the solutions may not be unique, there is no genuine inverse function theorem,  there is no natural topology for the dual space and none of the candidates is metrizable.  All these problems oblige us to handle carefully   the geometric objects related to an infinite dimensional Fr\'{e}chet manifold. There was a common belief between mathematicians that for infinite dimensional calculus each serious application
needs its own foundation until 1982 when A. Fr\"olicher and A. Kriegl presented independently  the solution to the question regarding the right differential calculus in infinite dimension: \textit{the convenient calculus}. P. Michor and A. Kriegl laid afterwards the foundations of the infinite dimensional differential geometry and brought everything together in their seminal book \cite{Michor}.

Twenty years before, in his influential article \cite{Arn1966}, V. Arnold had the idea to analyze the motion of hydrodynamical systems using geodesic flows. Actually he showed that the Euler equations of hydrodynamics can be recast as geodesic equations of a right-invariant Riemannian metric on the group of volume-preserving diffeomorphisms. Nowadays, similar methods are used in shape analysis \cite{Younes}. This approach became the so called \textit{geometric method in hydrodynamics} (see \cite{Escher-Kolev} for more details) and involves the use of geometric arguments to study issues like well-posedness or stability.

For example,  the Camassa-Holm equation

$$u_t-u_{txx}=2u_xu_{xx}+uu_{xxx}-3uu_{x}\hspace{1cm}$$
describes the geodesic flow on the group $ \DiffS$ of  orientation-preserving diffeomorphisms of the circle. The right-invariant metric considered is obtained by right-translations from the inner product
$$\langle u,v\rangle=\int_{\Circle}uv+u_xv_x\ dx,\ \ \ u,v\in T_{id} \DiffS,$$
One can observe that the differential operator $A=I-D^2$, called \textit{inertia operator} in this context, generates the inner product by
$$\langle u,v\rangle=\int_{\Circle}u\cdot Av\ dx.$$

There is a high flexibility of this method given by the choice of  an inertia operator or by the choice of an algebraic structure, usually involving diffeomorphism groups. In the last decade different inertia operators were studied starting with differential operators with constant coefficients \cite{CK2003}, Hilbert transforms \cite{EKW2012,Wun2010} or Fourier multipliers \cite{BauerEscherKolev,EKHS} and even pseudo-differential operators \cite{Cismas2016}. Among the algebraic structures studied one should mention homogeneous spaces \cite{Lenels}, the Bott-Virasoro group \cite{MichorRatiu},  semi-direct products between a group and a vector space \cite{Kohlmann,Vizman} or semidirect products between two groups \cite{Cismas2014,Cismas2016}.

The idea behind this geometric approach, initially developed by D.Ebin and J. Marsden in \cite{EM1970}, is to use the right-invariance of the geodesic spray to obtain, via a "no gain, no loss" result (see \cite{Escher-Kolev} for details) a Cauchy-Lipschitz type theorem on a Fr\'{e}chet space.  Using this method we avoid the Nash-Moser schemes in order to obtain well-posedness in the smooth category.  To formulate an inverse function theorem for Fr\'{e}chet spaces  we have to restrict to the category of tame Fr\'{e}chet spaces,  in the sense of R.S. Hamilton \cite{Ham1982}.  For example,  the results obtained for $H^{\infty}(\RR^d,\RR^d)$ in \cite{BauerEscherKolev}, with geometric arguments, can not be obtained with a classical Nash-Moser approach, since $H^{\infty}(\RR^d,\RR^d)$  is no longer a tame space.

The article of V. Arnold is not rigorous enough for the infinite dimensional case. Most of the research work influenced by \cite{Arn1966} uses the concept of G\^ateaux smoothness. This concept of smoothness can be satisfactory for some problems but it is not the appropriate one to use in manipulating vector bundles or differential forms. The convenient smoothness defined below fixes this problem. In \cite{MichorRatiu} and \cite{Michor} the authors started to present a solid foundation of the aforementioned geometric method in hydrodynamics.  There is still a missing piece: an introduction via convenient calculus of the geodesic spray theory. Thus, in Section \ref{Sec:6}, we will add the last piece of the puzzle.

\section{Notations} In this article we will use the following notations:
\begin{itemize}
	\item the letters  $\E,$ $\F,$ $\mathbb{G}$ will be used to denote vector spaces.
	\item (spaces of linear mappings)  $\mathrm{L}(\mathbb{E},\mathbb{F})$ denotes the space of linear and bounded mappings that in general do not coincide with the space $\mathcal{L}(\mathbb{E},\mathbb{F})$ of linear and continuous mappings, since $\E$ and $\F$ might not be bornological spaces.
\item (dual spaces) because sometimes we work with locally convex spaces which may not be bornological  we have two notions for the dual of a locally convex space $\E:$ the bornological dual, denoted $\E^{'},$ i.e. the set of all bounded linear functionals $f:\mathbb{E}\rightarrow \RR,$ and the topological dual, denoted $\E^*,$  which is the set of all continuous linear functionals. 

	\item (multiple derivatives) since we discuss different differentiability concepts we denote with $Df$ the Fr\'{e}chet derivative, with $\partial f$ the G\^ateaux derivative, and with $df$ the derivative in the convenient setting.
	
		\item $G$ is a Lie group, $\mathfrak g$ its Lie algebra and $\kappa^r$ is the right Maurer-Cartan form.
	
	\item (different objects) for elements in $\mathfrak g$, or in some modelling space $\mathbb E$, we use small letters  like $u,v$ or $x,y$, for vector fields greek letters $\xi, \eta$, and for second order vector fields capital letters like $X,Y.$
	
		\item (too many $r$'s) $\mathcal R$ is the bornological isomorphism between the convenient vector spaces $C^\infty(G,\mathfrak g)$ and $\Gamma(TG)$ and $R_g$ are the right translations on $TG,$ the tangent mappings of $r_g(h):=hg,$ for $\ g,h\in G.$

	\item if $A,B,C$ are sets for two mappings $f:A\rightarrow C^B$ and $g:A\times B\rightarrow C$ one can define the cannonically attached mappings, sometimes called adjoint mappings:
	$$f^{\wedge}: A\times B\rightarrow C, \hspace{0,3cm}f^{\wedge}(a,b):=f(a)(b), \hspace{0,2cm}a\in A, b\in B,$$
	$$g^{\vee}: A\rightarrow C^B, \hspace{0,3cm}g^{\vee}(a):=g(a,\cdot), \hspace{0,2cm}a\in A.$$

\end{itemize}

\section{Smooth differentiable mappings}
To discuss about a smooth structure of an infinite dimensional topological manifold we need a notion of differentiability between Fr\'{e}chet spaces. In  Banach spaces we have a notion of differentiability, called \textit{Fr\'{e}chet differentiability}, which permits us to extend  the differential calculus from  finite  dimension to infinite dimensional Banach spaces.
\begin{defn} Let $\E,\F$ be Banach spaces, and $U$ an open subset of $\E$. A mapping $f$ is said to be \textit{differentiable}  at a point $x\in U$, if there is an element $A_x\in\mathcal{L}(\E,\F)$ such that
	$$\lim_{h\rightarrow 0}\frac{\|f(x+h)-f(x)-A_x(h)\|}{\|h\|}=0.$$
\end{defn}
In this way  $f$ can be approximated locally by an affine mapping generated by the linear mapping $A_x,$ usually denoted by $D_xf.$ If $f$ is differentiable at every point $x\in U$, then $Df$ can be regarded as a mapping of $U$ into $\mathcal{L}(\E,\F)$, and $f$ is a $C^1$ differentiable mapping if and only if $Df$ is continuous. Since $\mathcal{L}(\E,\F)$ is a Banach space, a $C^k$ differentiable mapping can be defined inductively.

When $\E, \F$ are non-normable Fr\'{e}chet spaces  we have to cope with the following phenomenon, namely the composition
$$ \circ : \mathcal{L}(\mathbb{F},\mathbb{G})\times \mathcal{L}(\mathbb{E},\mathbb{F})\rightarrow \mathcal{L}(\mathbb{E},\mathbb{G}) $$ is not continuous for any locally convex topology which can endow the space of linear mappings, excepting the case when all the spaces are Banach. If we define a concept of smoothness that  uses  the continuity of the mapping
 $$Df: U\rightarrow \mathcal{L}(\E,\F)$$
  the concept will not be conserved by compositions, i.e. there will be no chain rule. For a discussion on this topic one can consult \cite{Bastiani} or \cite{Michor}.

The most used concept of smoothness for infinite dimensional manifolds, modelled on locally convex topological vector spaces, see \cite{Ham1982,Mil1984,Neeb,Hideki}, avoids the topology of $\mathcal{L}(\mathbb{E},\mathbb{F})$, using the product topology of $\mathbb{E}\times \mathbb{F}$ instead.
\begin{defn} Let  $f: U\subseteq \E\rightarrow\F$ be a  mapping between Fr\'{e}chet spaces, where $U$ is an open subset in $\E.$ We say that $f$ is  \textit{G\^ateaux differentiable} at $x\in U$ in the direction $h\in\E$ if the following limit exists
	$$\partial_x f(h):=\lim_{t\rightarrow 0}\frac{f(x+th)-f(x)}{t}.$$
	We say that $f$ is $C^1$-G\^ateaux differentiable  on $U$ if $f$ is continuous, the limit exists for all $x\in U$ and $h\in \E,$ and $\partial f: U\times \E\rightarrow \F$ is continuous relative to the product topology.
	Inductively we define the $C^k$-G\^ateaux differentiable mappings for $k\geq 2,$ and the G\^ateaux smooth mappings.
\end{defn}

This notion of differentiability  is weaker even in the context of Banach spaces, where  $C^{k+1}$-G\^ateaux differentiability implies $C^k$-Fr\'{e}chet differentiability, but the classes of smooth mappings coincide. 
There is no need for the spaces to be Fr\'echet, one can use locally convex topological vector spaces in the above definition, but we are focused on our goal: the smooth Fr\'{e}chet manifolds. This concept of $C^k$-G\^ateaux differentiable mappings coincides with those of $C^1_{MB}$-mappings in the sense of Michal-Bastiani  \cite{Bastiani,Michal} or $C^k_c$-mappings in the sense of Keller  \cite{Keller}. Using this differentiability concept one can introduce a smooth Fr\'{e}chet manifold, as in \cite{Ham1982}.
\begin{defn} A smooth \textit{Fr\'{e}chet manifold} is a Hausdorff topological space with an atlas of coordinate charts taking their value in Fr\'{e}chet spaces, such that the coordinate transition functions are  all G\^ateaux smooth mappings between Fr\'{e}chet spaces.
\end{defn}
Although this definition is the most popular one, it raises serious barriers when one tries to define some elementary geometric objects, e.g. differential forms. Of course, there are  attempts in this field to  use a stronger notion of differentiability, see \cite{Muller} for example, but most of them seem to fail in having serious applications in  infinite dimensional differential geometry. To be able to do some decent analysis one has to consider smooth  Fr\'{e}chet manifolds as particular cases of a more general notion: the \textit{smooth convenient manifolds}.

J. Boman had in \cite{Boman}  the idea to test the smoothness along smooth curves: a mapping $f$ from $\mathbb{R}^d$ to  $\mathbb{R}$ is smooth if and only if it sends smooth curves $u\in C^{\infty}(\mathbb{R},\mathbb{R}^d)$ into smooth curves $f\circ u \in C^{\infty}(\mathbb{R},\mathbb{R}).$ This concept was extended to mappings between locally convex spaces by A. Fr\"olicher and A. Kriegl and it will agree in the case of Fr\'{e}chet spaces with most of the smoothness notions already defined there. 
One can consult \cite{Averbukh,Keller} for a comparison between different differentiability concepts for locally convex spaces.
Thus, in \cite{Frolicher}, the authors constructed the so called convenient calculus for  locally convex topological vector spaces. In this context the k-fold differentiability is defined directly as well as infinite differentiability and one can avoid the topology of the space $\mathcal{L}(\E,\F).$ The remaining  of this section is devoted to introducing the notion of convenient smoothness. We will explain why this notion coincides with the G\^ateaux smoothness in the case of Fr\'{e}chet manifolds.

For  locally convex topological vector space $\E$ we call the final topology with respect to all smooth curves $c\in C^{\infty}(\mathbb{R},\E),$  the \textit{$\cinf$-topology}.
\begin{defn} A subset $U\subseteq \E$ is called \textit{$\cinf$-open }  iff $c^{-1}(U)$ is open in $\mathbb{R}$ for all $c\in C^{\infty}(\mathbb{R},\E),$ and we denote by \textit{$\cinf \E$} the space $\E$ equiped with this topology. 
\end{defn}

In other words the $\cinf$-topology  is the finest topology on $\E$ such that all the smooth curves $c:\mathbb{R}\rightarrow \E$ become continuous. If $\E$ is a Fr\'{e}chet space then the  $\cinf$-topology coincides with the given locally convex topology, according to \cite{Michor}, but in general the $\cinf$-topology is finer than any locally convex topology with the same bounded sets. The space $\cinf \E$ is not a topological vector space in general.

In a locally convex space $\E$ a curve $c:\mathbb{R}\rightarrow\E$ is called smooth if all its derivatives exist and are continuous.  The smoothness of the curves does not depend on the topology given on $\E$, in the sense that for all topologies leading to the same dual we have the same family of smooth curves.
In fact it depends only on the family of bounded sets, the  \textit{bornology} of $\E,$  see the Appendix.

\begin{defn}Let $\E,\F$ be locally convex spaces, a mapping $f: U\subseteq \mathbb{E}\rightarrow\mathbb{F}$ defined on a $\cinf$-open subset $U$  it is called  \textit{convenient smooth} if  it maps smooth curves in $U$ into smooth curves in $\F.$
\end{defn}

With this concept of smoothness there exist convenient smooth mappings which are not continuous, but all the convenient smooth mappings are continuous relative to the $\cinf$-topology, according to \cite{Frolicher,Michor}. The G\^ateaux smoothness will imply convenient smoothness but not conversely. Anyway, on Fr\'{e}chet spaces the two notions coincide.

\begin{prop} Let $\E,\F$ be Fr\'{e}chet spaces and $U\subseteq \E$ a $\cinf$-open subset, then $U$ is open and the mapping $f:U\subseteq \mathbb{E}\rightarrow\mathbb{F}$ is G\^ateaux smooth if and only if it  is convenient smooth.
\end{prop}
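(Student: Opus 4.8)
The plan is to treat the three assertions — that a $\cinf$-open $U$ is open, and the two implications of the equivalence — one at a time.

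For the openness of $U$, there is essentially nothing to do beyond invoking the fact recalled above (and proved in \cite{Michor}) that on any metrizable locally convex space, hence on the \Fr space $\E$, the $\cinf$-topology coincides with the locally convex topology; consequently a $\cinf$-open set is open in the original sense. I would state this at the outset, since it is also what makes the product domains appearing below honest open subsets.

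Next I would handle the implication ``G\^ateaux smooth $\Rightarrow$ convenient smooth''. Here the only input is the chain rule for $C^\infty$-G\^ateaux mappings (equivalently Michal--Bastiani, or Keller's $C^\infty_c$, mappings), the very compatibility already used to make sense of \Fr manifolds with G\^ateaux smooth transition maps. Given $c\in\CS(\RR,\E)$ with values in $U$, the curve $c$ is trivially G\^ateaux smooth into the open set $U$, so $f\circ c$ is a composition of G\^ateaux smooth maps and is therefore G\^ateaux smooth as a map $\RR\to\F$, i.e. a smooth curve. Thus $f$ carries smooth curves to smooth curves and is convenient smooth. The only care required is to cite the chain rule in exactly the form ``a G\^ateaux smooth map precomposed with a smooth curve is a smooth curve''.

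The substantive direction is the converse. Suppose $f$ is convenient smooth. \emph{Step one:} $f$ is continuous, since every convenient smooth map is continuous for the $\cinf$-topology \cite{Frolicher,Michor}, and on $\E$ that is the given \Fr topology. \emph{Step two:} for fixed $x\in U$, $h\in\E$, the affine curve $t\mapsto x+th$ is smooth and stays in the open set $U$ for $|t|$ small, so $t\mapsto f(x+th)$ is a smooth curve in $\F$ and differentiating at $t=0$ yields $\partial_x f(h)$. \emph{Step three} (the heart of the matter): by the differentiation results of the convenient calculus \cite{Michor} — the derivative of a convenient smooth map is again convenient smooth — the map $df\colon U\times\E\to\F$, $(x,h)\mapsto\partial_x f(h)$, is convenient smooth; since $U\times\E$ is $\cinf$-open in the \Fr space $\E\times\E$, Step one applies again and $df$ is continuous for the product topology. \emph{Step four:} iterate, observing that $U\times\E^k$ is $\cinf$-open in the \Fr space $\E^{k+1}$, so all iterated directional derivatives exist and are continuous; after the routine identification of the iterated convenient derivative $d^kf$ with the inductively defined G\^ateaux derivative $\partial^k f$ (including symmetry), this is exactly the statement that $f$ is G\^ateaux smooth.

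I expect the main obstacle to be Step three of the converse: correctly invoking the fundamental differentiation theorem of the convenient calculus to get convenient smoothness — hence, via the topology coincidence, continuity — of $df$, and carrying out the bookkeeping that matches the convenient iterated derivatives $d^k f$ to the G\^ateaux derivatives $\partial^k f$. The remaining points (continuity of $f$, existence of directional derivatives, $\cinf$-openness of the product domains) are routine once the metrizable-topology-coincidence theorem is in hand.
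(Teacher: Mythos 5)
Your proposal is correct and follows essentially the same route as the paper: openness via the coincidence of the $\cinf$-topology with the \Fr topology, the easy direction by composing with smooth curves, and the converse via the convenient differentiation theorem giving smoothness of $df$, cartesian closedness to pass to $U\times\E\to\F$, and the topology coincidence to convert $\cinf$-continuity into ordinary continuity. You are merely more explicit than the paper about the continuity of $f$ itself and about iterating to all orders, which the paper leaves implicit after the first-order step.
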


\begin{proof} As we mentioned before in the case of a Fr\'{e}chet space $\cinf \E=\E $ and thus $U$ is open for the given topology on $\E.$ If $f$ is G\^ateaux smooth  then one can  easily see that $f\circ c$ will be G\^ateaux smooth for all smooth curves $c\in C^{\infty}(\mathbb{R},\E)$, thus $f$ is convenient smooth. We denote by $d_xf$ the derivative of a convenient smooth mapping as in the Appendix.
	If $f$ is smooth in the convenient sense then, by Proposition \ref{derivata} in the Appendix, the mapping  $df: U\rightarrow \mathrm{L}(\mathbb{E},\mathbb{F})$ exists and is convenient smooth. Here $\mathrm{L}(\mathbb{E},\mathbb{F})$ denotes the space of bounded (not necessarily continuous) linear mappings between convenient vector spaces. See the Apppendix for the definition of a convenient vector space. The cartesian closedness property, Proposition \ref{cartesian  closedness}, implies that $\partial f:=df: U\times\E\rightarrow\F$ is convenient smooth, thus continuous relative to the $\cinf$- topologies, which coincide here with the given topologies on $\E, \F.$
\end{proof}

\begin{rem} This notion of convenient smoothness or Boman smoothness can substitute the  most used notion of smoothness for Fr\'{e}chet spaces and implicitly for smooth Fr\'{e}chet manifolds. 
\end{rem}

\section{Convenient manifolds.  Regular Lie groups} Let us introduce now the geometric objects we will need. In order to do that we follow closely \cite{Michor}:

\begin{defn} (Convenient manifolds) A chart $(\mathcal U,\f)$ on a set $M$ is a bijection $\f : \mathcal U\rightarrow U\subseteq \E_{\mathcal U}$ from a subset $\mathcal U\subseteq M$ onto a $\cinf$-open subset $U$ of a convenient vector space $\E_{\mathcal U}.$
	For two charts $(\mathcal U_{\alpha},\f_{\alpha})$ and $(\mathcal U_{\beta},\f_{\beta})$ on M the mapping 
	$$\f_{\raisebox{-2pt}{\tiny $\alpha\beta$}}:= \f_{\alpha}\circ \f^{-1}_{\beta} : \f_{\beta}(\mathcal U_{\alpha} \cap \mathcal U_{ \beta})\rightarrow \f_{\alpha}( \mathcal U_{\alpha} \cap \mathcal U_{ \beta})$$
	is called the transition mapping. A family $(\mathcal U_{\alpha},\f_{\alpha})_{\alpha\in A}$ of charts on $M$ is called an \textit{atlas} for $M,$ if the sets $\mathcal{\alpha}$ form a  cover of $M$ and all transition mappings are  defined on $\cinf$-open subsets.
	
	An atlas for $M$ is called smooth if all transition mappings $\f_{\alpha\beta}$ are \textit{convenient smooth}. Two smooth atlases are called smooth-equivalent if  their union is again a smooth atlas. An equivalence class of smooth atlases is a \textit{smooth structure} for $M.$ 
	A \textit{smooth convenient manifold} $M$ is a set together with a smooth structure on it.
\end{defn}

The isomorphism type of the modelling spaces $\E_{\mathcal U}$ is constant on the connected components of the manifold $M,$ since the derivative of the chart changings are linear isomorphisms. The manifold $\DiffS,$ considered as an example in this article, is a connected manifold. Since we are focused only in offering a theoretical background for the Euler-Poincar\'e-Arnold equations, we are entitled to consider $\E_{\mathcal U}=\E$  in some of our reasonings to avoid further technicalities.
In the case of manifolds modelled on Fr\'{e}chet spaces the above definition coincides with the one presented before.

\begin{defn} A mapping $f:M\rightarrow N$ between  convenient smooth   manifolds is called \textit{convenient smooth} if for each $p\in M$ and each chart $(\mathcal V_{\beta}, \psi_{\beta})$ on $N,$ with $f(p)\in \mathcal V_{\beta}$ there is a chart $(\mathcal U_{\alpha}, \f_{\alpha})$ on $M$ with $p\in \mathcal U_{\alpha},$ $f(\mathcal U_{\alpha})\subseteq\mathcal V_{\beta}$ and \textit{the local representative} $\psi_{\beta}\circ f\circ \f^{-1}_{\alpha}$  is convenient smooth. This is the case if and only if $f\circ c$ is a  smooth curve on $M$ for each smooth curve $c:\RR\rightarrow M.$ 
\end{defn}

\begin{defn} A convenient smooth Lie group $G$ is a convenient smooth manifold and a group such that the multiplication
	$m_G:G\times G\rightarrow G$ and the inversion $i_G:G\rightarrow G$ are convenient smooth.
\end{defn}

The conjugation mapping $c_g(x):=gxg^{-1}$ generates the adjoint representation of the Lie group $G$ 
$$\Ad: G\rightarrow GL(\mathfrak{g})\subset L(\mathfrak{g}, \mathfrak{g}),$$ considered as a convenient smooth mapping into  $L(\mathfrak{g}, \mathfrak{g}).$ 
In this way it makes sense to define the ajoint representation of the Lie algebra $\mathfrak{g}$ as
$$\ad:=T_e\Ad :\mathfrak{g}\rightarrow \mathfrak{gl}(\mathfrak{g})= L(\mathfrak{g}, \mathfrak{g}).$$
In the particular case $G=\DiffS$, the group of orientation-preserving diffeomorphisms of the circle,  its Lie algebra is $\mathfrak{g}=\CS$ the vector space of smooth real functions on the circle.  The adjoint representation of $\mathfrak g=\CS$ is given by
\begin{equation}
\label{adDiff}
\ad_uv=-[u,v]=u_xv-v_xu, \ \ u,v\in \CS.
\end{equation}

For an infinite dimensional Lie group the Lie exponential mapping may not exist or may not be smooth. An attempt to find a condition which ensures both these properties led to the notion of regular Lie groups, introduced by J. Milnor \cite{Mil1984}.

\begin{defn}\label{regular} A convenient smooth Lie group is called \textit{regular} if for every curve $u\in\mathrm{C}^{\infty}(\mathbb{R},\mathfrak{g})$ there exists a curve $g\in\mathrm{C}^{\infty}(\mathbb{R},G)$ such that
	
	$$\begin{cases} g(0)=e_{\raisebox{-2pt}{\tiny G}},  & \\  
	R_{g(t)^{-1}}\dot{g}(t)=u(t).& \end{cases}$$
	and the evolution mapping
	$$\mathrm{evol}^r_{\raisebox{-2pt}{\tiny G}} : \mathrm{C}^{\infty}(\mathbb{R},\mathfrak{g})\rightarrow G,\hspace{0,5cm} \mathrm{evol}^r_{\raisebox{-2pt}{\tiny G}}(u):=g(1) $$
	exists and is convenient smooth.
\end{defn}

Examples of regular Lie groups are offered by the \textit{strong ILH-Lie groups} in the sense of H. Omori \cite{Hideki}.
\begin{defn}
	A topological group $G$ is a strong ILH-Lie group modelled on the Fr\'echet space $\mathfrak g:=\E=\lim\limits_{\leftarrow q} \E^q, q\geq d,$ if and only if there exists a system $ G^q, q\geq d,$ of topological groups $G^q$ satisfying the following conditions:
	\begin{itemize}
		\item [($G_1$)] every group $G^q$ is a Hilbert manifold modelled on the Hilbert space $\E^q,$
		\item  [($G_2$)] $G^{q+1}$ is a dense subgroup in $G^q,$ and the embedding $G^{q+1}\subset G^q$ is a mapping of class $C^{\infty}$,
		\item  [($G_3$)] $G=\underset{q\geq d}{\cap} G^q$ with inverse limit topology,
		\item [($G_4$)] the group multiplication mapping $m_G:G\times G\rightarrow G$ extends to a mapping $G^{q+l}\times G^q\rightarrow G^q$ of class $C^l,$
		\item [($G_5$)] the inversion mapping $i_G:G\rightarrow G$ extends to a mapping $G^{q+l}\rightarrow G^q$ of class $C^l,$
		\item [($G_6$)] for each $g\in G^q$ the right translation $r_{g}:G^q\rightarrow G^q$ is a mapping of class $C^{\infty},$
		\item [($G_7$)] let $\mathfrak{g}^q:=\E^q$ be the tangent space of $G^q$ at the identity $e\in G^q,$ and let $TG^q$ be the tangent bundle. The mapping $Tr:\mathfrak{g}^{q+l}\times G^q\rightarrow TG^q$ defined by $Tr(u,g):=R_gu$ is a mapping of class $C^l,$ 
		\item [($G_8$)] there exists an open neighborhood $U$ of zero in $\mathfrak{g}^d$ and a diffeomorphism $\Phi$ of $U$ onto an open neighborhood $\mathcal{U}$ of the unity $e\in G^d,$  $\Phi(0)=e,$ such that the restriction of $\Phi$ to $U\cap \mathfrak{g}^q$ is a diffeomorphism of the open subset $U\cap \mathfrak{g}^q$ from $\mathfrak{g}^q$ onto an open subset $\mathcal{U}\cap G^q$ from $G^q$ for any $q\geq d.$
	\end{itemize}
	\end{defn}
	
In the case $G=\DiffS$ for $q > \frac{3}{2}$, the set $G^q:=\D{q}$ of $C^1$-orientation preserving diffeomorphisms
of the circle, which are of class $H^q$, has the structure of a Hilbert manifold modelled on $\mathbb E^q:=H^q(\mathbb S^1)$. It is also a topological group.  $\DiffS=\cap_{q >\frac{3}{2}} \D{q}$ is a strong ILH-Lie group according to \cite{Hideki}, thus a regular convenient smooth  Lie group.
	
\section{Euler-Arnold equations on  regular  Lie groups}\label{Euler-Arnold on regular groups}
In oder to define a  Riemannian metric on a  regular convenient Lie group $G$  an inner product on the Lie algebra $\mathfrak{g}$ is extended to every tangent space by right translations
\begin{equation}\label{metric}\langle u_g,v_g \rangle_{g}=\langle R_{g^{-1}}u_g,R_{g^{-1}}v_g\rangle_e,\hspace{1,5cm}u_g,v_g\in T_gG,\hspace{0,2cm} g\in G. \end{equation}
If this inner product is generated by an isomorphism $A: \mathfrak{g}\rightarrow\mathfrak{g}^*$, which is positive-definite and symmetric with respect  to the natural pairing $(\cdot,\cdot)$ between elements of  $\mathfrak{g}^*$ and  $\mathfrak{g}$
\begin{equation}\label{notations}
\langle u,v\rangle^A_e:=(u,Av)=(Au,v),\hspace{1cm}  u,v\in \mathfrak{g},
\end{equation}
then this operator is called the \textit{inertia operator} on $G$.
The natural pairing is actually the evaluation mapping. By Remark \ref{naturalpairing} in the Appendix,  it is  convenient smooth if, for example, the topological dual $g^*$ is endowed with the strong topology and $g$ is a convenient vector space. It will never be G\^ateaux smooth because  G\^ateaux smoothness implies continuity.

\begin{rem}
	
When working with an infinite dimensional Lie group one can not consider  bi-invariant metrics because in this case  the Riemannian exponential mapping and the Lie exponential mapping will coincide and the latter one can behave bizarrely. Another problem occurs, exemplified here for the Fr\'{e}chet \hspace{-0,1cm}-Lie group $G=\DiffS$. In order to maintain the isomorphism property of the inertia operator, described above, we have to restrict $\mathfrak{g}^*$ to its regular dual
$$\mathfrak{g}^*_{reg}\cong\CS,$$
defined as the space of linear functionals of the form
$$u\rightarrow \int_{\Circle} m\cdot u \hspace{0,05cm} dx, $$ for $m\in\CS,$  due to \cite{Kirillov,Kolev}. The pairing between the elements of   $\mathfrak{g}^*_{reg}$ and $\mathfrak{g}$ will be given by the $L^2(\mathbb{S}^1)$-inner product
\begin{equation}\label{pairing}(u,v):=\langle u,v\rangle_{L^2(\Circle)}=\int_{\Circle} u\cdot v \hspace{0,1cm} dx.\end{equation}
The topology on $\mathfrak{g}^*_{reg}$ is not the induced one and now the pairing becomes even G\^ateaux smooth, which is impossible without the above convention. With this convention the inertia operator $A:\mathfrak{g}\rightarrow \mathfrak{g}^*_{reg}$ will be called \textit{regular inertia operator}.
\end{rem}

If the adjoint of $\ad_v$ relative to the inner product \eqref{notations} exists then the geodesics can be determined with the help of this operator. We remind here that a bilinear operator is bounded if and only if it is convenient smooth by Theorem \ref{derivata}. On $\CS$ boundedness will be equivalent with continuity, being a bornological space.

\begin{thm}\label{Arnold} (V. Arnold,  \cite{Arn1966})
		If the inner product $\langle \cdot,\cdot\rangle_e:\mathfrak{g}\times\mathfrak{g}\rightarrow\mathbb{R}$ defined in \eqref{notations} is bounded and there exists a bounded bilinear operator
		$$B :\mathfrak{g}\times\mathfrak{g}\rightarrow \mathfrak{g} ,$$ 
		 with the property
		$$\langle B(u,v),w \rangle_e= \langle u,\ad_v w \rangle_e,\hspace{0,5cm}w\in\mathfrak{g},$$ where $ad_v$ is the adjoint representation of $\mathfrak{g},$ 
		then a smooth curve $g(t)$ on the regular convenient Lie group $G$ is a geodesic for the right-invariant metric  defined by \eqref{metric}  if and only if its Eulerian velocity  $u(t)=R_{g(t)^{-1}}\dot{g}(t)$  satisfies the first order equation $$u_t=-B(u,u).$$ 
	\end{thm}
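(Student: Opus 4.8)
The plan is to reformulate geodesy as a variational (or equivalently first-order-ODE) condition on the Lie algebra level, using the right-invariance of the metric to transport everything to the identity. First I would write a geodesic as a critical point of the energy functional $E(g)=\frac12\int_0^1 \langle \dot g(t),\dot g(t)\rangle_{g(t)}\,dt$ over curves with fixed endpoints. By the definition \eqref{metric} of the right-invariant metric, $\langle \dot g,\dot g\rangle_{g}=\langle u,u\rangle_e$ where $u(t)=R_{g(t)^{-1}}\dot g(t)$ is the Eulerian velocity, so $E(g)=\frac12\int_0^1\langle u(t),u(t)\rangle_e\,dt$. The key computational lemma is the standard formula relating a variation $\delta g$ of the curve to the induced variation $w(t)=R_{g(t)^{-1}}\delta g(t)\in\mathfrak g$ of the Eulerian velocity, namely $\delta u=\dot w+\ad_u w$ (equivalently $\delta u=\dot w-[u,w]$ in the sign convention of \eqref{adDiff}); this is where regularity of $G$ enters, since it guarantees that such variations are realized by genuine smooth curves and that the bracket and the evolution map behave as expected.

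Next I would substitute this into the first variation: $\delta E=\int_0^1\langle u,\delta u\rangle_e\,dt=\int_0^1\langle u,\dot w+\ad_u w\rangle_e\,dt$. Using that $\langle\cdot,\cdot\rangle_e$ is bounded (hence convenient smooth, so differentiation under the integral and integration by parts in $t$ are legitimate), the first term integrates by parts to $-\int_0^1\langle \dot u,w\rangle_e\,dt$ (boundary terms vanish since $w(0)=w(1)=0$), and the second term is rewritten using the hypothesis $\langle u,\ad_u w\rangle_e=\langle B(u,u),w\rangle_e$. Hence $\delta E=-\int_0^1\langle \dot u+B(u,u),w\rangle_e\,dt$. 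Since this must vanish for all admissible $w\in C^\infty(\mathbb R,\mathfrak g)$ with vanishing endpoints, and since $\langle\cdot,\cdot\rangle_e$ comes from the injective inertia operator $A$ (so the pairing $w\mapsto\langle\xi,w\rangle_e$ vanishing identically forces $\xi=0$), we conclude $u_t=-B(u,u)$. Conversely, reversing these steps shows that any curve whose Eulerian velocity solves $u_t=-B(u,u)$ is a critical point of $E$, hence a geodesic.

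The main obstacle I expect is twofold and both parts are "infinite-dimensional bookkeeping" rather than deep: first, justifying the variational calculus rigorously in the convenient setting — one must check that $E$ is a convenient smooth function on the relevant space of curves, that the exchange of $d/dt$ with the variation and the integration by parts are valid (this rests on boundedness of $\langle\cdot,\cdot\rangle_e$ and Proposition~\ref{cartesian closedness} / the differentiation results in the Appendix), and that on a regular Lie group the map from curve-variations $w$ to velocity-variations $\delta u=\dot w+\ad_u w$ is onto the space of admissible $\delta u$. Second, one must verify the nondegeneracy step: from $\int_0^1\langle \xi(t),w(t)\rangle_e\,dt=0$ for all smooth $w$ vanishing at the endpoints, deduce $\xi\equiv 0$; this follows from a standard fundamental-lemma-of-calculus-of-variations argument combined with positive-definiteness of $A$ on $\mathfrak g$. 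A cleaner alternative, avoiding the space of curves altogether, is to compute the covariant derivative of the right-invariant connection directly: derive the Levi-Civita connection formula $\nabla_{u}v=-\frac12\big(\ad_u v+B(u,v)+B(v,u)\big)$ at the identity from the Koszul formula (again legitimate because all the bilinear maps in sight are bounded), whence the geodesic equation $\nabla_{\dot g}\dot g=0$ in Eulerian variables reads $u_t=-\nabla_u u=-B(u,u)$; I would present the variational argument as the primary route and mention this as a remark, since the Koszul-formula route requires setting up the right-invariant Levi-Civita connection, which the excerpt has not yet done.
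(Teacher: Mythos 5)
Your proposal is correct and follows essentially the same route as the paper: first variation of the energy, the commutator/Maurer--Cartan formula for the induced variation of the Eulerian velocity, integration by parts plus the defining property of $B$, regularity of $G$ (via the regularity of $\mathrm{C}^{\infty}(\mathbb{R},G)$) to realize every $\mathfrak g$-valued variation $w$ with vanishing endpoints by an actual variation of the curve, and positive-definiteness of the inner product to conclude. The only point to watch is the sign bookkeeping: with the paper's convention $\ad_u=-[u,\cdot]$ your displayed identity $\delta u=\dot w+\ad_u w$ yields $\delta E=\int\langle -\dot u+B(u,u),w\rangle_e\,dt$ rather than $-\int\langle \dot u+B(u,u),w\rangle_e\,dt$, so the sign in the variation formula must be fixed by deriving it from the right Maurer--Cartan structure equation, which is exactly how the paper obtains it.
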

	\begin{proof} We present the proof adapted to the convenient approach, whereas  the original proof of V. Arnold is not rigorous enough in the infinite dimensional setting. Parts of the proof are presented in Section 46.4 of \cite{Michor}. 
		
	For a smooth $g: [a,b]\rightarrow G$  the energy functional of the curve $g$ is
		
		$$E(g):=\frac{1}{2}\int_a^b\langle \dot{g}(t),\dot{g}(t)\rangle_{g(t)}dt= \frac{1}{2}\int_a^b\langle g^*\kappa^r(\partial_t),g^*\kappa^r(\partial_t)\hspace{0,1cm}\rangle_edt,$$
		where we pullback the Maurer-Cartan form $\kappa^r$ on $T\RR,$ by $g.$

	The first part uses arguments borrowed from the finite dimensional case. One has to introduce  $g(s,t)$ a smooth variation  of the curve $g,$  $s\in (-\varepsilon, \varepsilon), t\in[a,b],$  with fixed endpoints $g(s,a)=g(a),$ $g(s,a)=g(b)$  and let us  denote the curves $u(s,t):= R_{g(s,t)^{-1}}\partial_t g(s,t)$ and $v(s,t):=  R_{g(s,t)^{-1}}\partial_s g(s,t).$ In particular we have $u_0(t):=u(0,t):[a,b]\rightarrow \mathfrak{g}$ and  $v_0(t):=v(0,t):[a,b]\rightarrow \mathfrak{g}.$ 
			\begin{equation*}\begin{split}\partial_s E(g)&= \frac{1}{2}\int_a^b2\langle \partial_s( g^*\kappa^r(\partial_t)),g^*\kappa^r(\partial_t)\hspace{0,1cm}\rangle_e \mbox{ }dt\\
		&= \int_a^b\langle \partial_t( g^*\kappa^r(\partial_s))-d(g^*\kappa^r )(\partial_t,\partial_s), g^*\kappa^r(\partial_t)\hspace{0,1cm}\rangle_e \mbox{ }dt
		\end{split}
		\end{equation*}
		because $[\partial_t,\partial_s]=0$ by Schwarz's theorem. Further
		
	\begin{equation*}\begin{split} & \int_a^b-\langle  g^*\kappa^r(\partial_s), \partial_t(g^*\kappa^r(\partial_t))\rangle_e- \langle [g^*\kappa^r (\partial_t),g^*\kappa^r (\partial_s)], g^*\kappa^r(\partial_t)\hspace{0,1cm}\rangle_e \mbox{ }dt\\
		&= -\int_a^b\langle  g^*\kappa^r(\partial_s), \partial_t(g^*\kappa^r(\partial_t))+  B(g^*\kappa^r (\partial_t), g^*\kappa^r (\partial_t)) \hspace{0,1cm}\rangle_e \mbox{ }dt,
		\end{split}
		\end{equation*}
		exploiting the fixed endpoints of the variation and applying the right Maurer-Cartan equation.
		
		The curve $g$ is a geodesic for the metric \eqref{metric} iff the derivative vanishes at $s=0$ for all variations $g(s,t)$ of $g$ with fixed endpoints. By Corollary 38.13 in \cite{Michor} the group $\mathrm{C}^{\infty}(\mathbb{R}, G)$ is a  regular convenient Lie group, if $G$ is a regular convenient  Lie group. It has the Lie algebra $\mathrm{C}^{\infty}(\mathbb{R}, \mathfrak{g})$ with the bracket $[X,Y](t):=[X(t),Y(t)]_{\mathfrak{g}}.$ Thus, following the definition of a  regular convenient  Lie group, for every curve
		$$v(s,t)\in \mathrm{C}^{\infty}(\mathbb{R},\mathrm{C}^{\infty}(\mathbb{R}, \mathfrak{g}))=\mathrm{C}^{\infty}(\mathbb{R}\times \mathbb{R} , \mathfrak{g})$$ there exists a curve
	
		$$g(s,t)\in \mathrm{C}^{\infty}(\mathbb{R},\mathrm{C}^{\infty}(\mathbb{R}, G))\subseteq \mathrm{C}^{\infty}(\mathbb{R}\times \mathbb{R} , G),$$
		such that
		$$\begin{cases}g(0,t)=g(t)\in \mathrm{C}^{\infty}(\mathbb{R}, G), & \\
		v(s,t):=  R_{g(s,t)^{-1}}\partial_s g(s,t).& \end{cases}$$
		
		In particular, every smooth curve $v_0:[a,b]\rightarrow \mathfrak{g}$ corresponds to a variation with fixed endpoints of $g.$ Eventually, for all $v_0$
		$$\int_a^b\langle v_0(t),\dot{u}_0(t)+B(u_0(t),u_0(t))\rangle_e \mbox{ }dt=0.$$
		Applying this identity for the smooth curve $v_0(t):= \dot{u}_0(t)+B(u_0(t),u_0(t))$ we get the conclusion, since the inner product \eqref{notations} is positive definite and smoothness implies continuity for curves.
	\end{proof}
		
In the case $G=\DiffS$, for the right-invariant metric induced as in \eqref{metric} and \eqref{notations} by the inner product $\langle u,v\rangle=\int_{S^1}Au\cdot v\  dx$, with $A:\CS\to \CS$ continuous linear, invertible, positive definite and $L^2$-symmetric, one gets
$$B(u,v)=A^{-1}\left\{ u\cdot (Av)_x+2 Av\cdot u_x     \right\},$$
since $\ad_uv$ has the expression given in \eqref{adDiff}.
Hence a curve $\f(t)$ is a geodesic of the right-invariant metric induced by the inertia operator $A$ if and only if its Eulerian velocity $u(t):=R_{\f(t)^{-1}}\dot{\f}(t)=\dot {\f}(t)\circ \f(t)^{-1}$ satisfies the equation
$$u_t=-A^{-1}\left\{ u\cdot (Au)_x+2 Au\cdot u_x     \right\}.$$

	The equation from the above theorem is called the \hspace{-0,2cm}\textit{ Euler-Arnold equation} induced by an inertia operator $A$. In general a Levi-Civita connection related to the Riemannian metric \eqref{metric} is not granted, because a metric like \eqref{metric}  generates  a flat mapping $v_g\mapsto\langle v_g, \cdot\rangle_g$ that is only injective. If the adjoint $\ad^T_u$ exists such a connection also exists. 
	To derive its formula we have to identify the space $\Gamma(TG)$ of smooth sections with the convenient vector space $C^\infty(G,\mathfrak g)$ of $\mathfrak g$-valued fuctions on $G$.  The isomorphism is induced by the right trivialization $\rho=(\pi_G,\kappa^r) :TG\rightarrow G\times\mathfrak{g}.$

\begin{prop}\label{bornologicaliso} The mapping  $\mathcal R : C^{\infty}(G,\mathfrak{g})\rightarrow \Gamma(TG)$
	 $$\widebar X\to \mathcal R_{\widebar X}$$ 
	 where $\mathcal R_{\widebar X}$ is defined by $\mathcal R_{\widebar{X}}(g):=R_g(\widebar X(g)),$ for $\widebar X\in \mathrm{C}^{\infty}(G,\mathfrak{g}),$ $g\in G,$ is a bornological isomorphism.
	\end{prop}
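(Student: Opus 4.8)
The plan is to show that $\mathcal R$ is a linear bijection whose inverse is explicitly given, and then to verify that both $\mathcal R$ and $\mathcal R^{-1}$ are bounded (equivalently, convenient smooth) maps between the convenient vector spaces $C^\infty(G,\mathfrak g)$ and $\Gamma(TG)$. First I would recall that the right trivialization $\rho=(\pi_G,\kappa^r):TG\to G\times\mathfrak g$ is a diffeomorphism of vector bundles over $G$; this is a standard fact about Lie groups in the convenient setting (see \cite{Michor}), since $\kappa^r_g = T R_{g^{-1}}:T_gG\to\mathfrak g$ is a linear isomorphism on each fibre and depends smoothly on $g$. Composing a section $X\in\Gamma(TG)$ with $\rho$ and projecting onto the second factor produces a map $g\mapsto \kappa^r(X(g))\in\mathfrak g$, i.e.\ an element $\widebar X\in C^\infty(G,\mathfrak g)$; conversely, given $\widebar X\in C^\infty(G,\mathfrak g)$, the assignment $g\mapsto R_g(\widebar X(g))$ is the section $\mathcal R_{\widebar X}$. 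These two constructions are mutually inverse because $R_g\circ\kappa^r_g=\id_{T_gG}$ and $\kappa^r_g\circ R_g=\id_{\mathfrak g}$, so $\mathcal R$ is a linear bijection with $\mathcal R^{-1}(X)(g)=\kappa^r(X(g))$.

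Next I would establish bornological continuity in both directions. The key is the cartesian closedness of the convenient calculus, Proposition \ref{cartesian  closedness}: a linear map between convenient vector spaces is bounded if and only if it is convenient smooth, and to check smoothness of $\mathcal R$ it suffices to test against smooth curves. Given a smooth curve $c\mapsto \widebar X_c$ in $C^\infty(G,\mathfrak g)$, the associated curve of sections $c\mapsto \mathcal R_{\widebar X_c}$ is smooth into $\Gamma(TG)$ precisely because the map $(c,g)\mapsto R_g(\widebar X_c(g))$ is a composition of the (smooth) evaluation $C^\infty(G,\mathfrak g)\times G\to\mathfrak g$ with the smooth bundle map $(g,v)\mapsto R_g v$, and by cartesian closedness this smoothness in the joint variables transfers to smoothness of the curve into the section space. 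The same argument run backwards, using the smooth bundle map $\rho$ in place of its inverse, shows that $\mathcal R^{-1}$ is convenient smooth, hence bounded. Since $\mathcal R$ is linear, bijective, and bounded with bounded inverse, it is a bornological isomorphism.

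The main obstacle — or rather the point requiring the most care — is making precise the convenient-manifold structure on the section space $\Gamma(TG)$ and on $C^\infty(G,\mathfrak g)$ and the claim that smoothness of a map into a section space is detected by smoothness of the corresponding two-variable map on $G$. This is exactly the content of the exponential law for spaces of sections of vector bundles (the $C^\infty(\cdot,\cdot)$-analogue of cartesian closedness, as developed in \cite{Michor}, Sections 30 and 42), so once that machinery is invoked the verification is routine. I would therefore organize the proof as: (i) recall that $\rho$ is a smooth vector-bundle isomorphism over $G$; (ii) write down $\mathcal R$ and $\mathcal R^{-1}$ explicitly and check algebraically that they are inverse linear bijections; (iii) invoke cartesian closedness / the exponential law to conclude that both are convenient smooth, hence bounded, so that $\mathcal R$ is a bornological isomorphism.
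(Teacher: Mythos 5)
Your argument is correct and reaches the conclusion, but by a genuinely different route from the paper's. You reduce boundedness to convenient smoothness of the linear maps $\mathcal R$ and $\mathcal R^{-1}$ and then test along smooth curves, invoking the exponential law for section spaces (a smooth curve into $\Gamma(TG)$ corresponds to a smooth map $\RR\times G\to TG$ over $G$); as you yourself flag, that exponential law is the nontrivial ingredient. The paper instead works directly with bornologies: for $\mathcal R^{-1}(\xi)=\kappa^r(\xi)=i(\xi,\kappa^r)$ it quotes smoothness of the insertion operator, and for $\mathcal R$ it exploits the fact that $TG$ is globally trivializable by $\rho=(\pi_G,\kappa^r)$, so that the structure maps $f_\alpha$ defining the initial topology on $\Gamma(TG)$ satisfy $f_\alpha\circ\mathcal R=g_\alpha$, the structure maps of $C^\infty(G,\mathfrak g)$; boundedness is then immediate once one knows (cf. \cite{Gach}) that the von Neumann bornology of this initial topology coincides with the initial bornology. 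One caution about your route: in this paper the convenient structure on $\Gamma(TG)$ is \emph{defined} through the trivialization $\rho$, so deriving the exponential law for $\Gamma(TG)$ from its identification with $C^\infty(G,\mathfrak g)$ would be circular --- you must invoke the exponential law for sections of an arbitrary vector bundle, established chart-by-chart in \cite{Michor}, rather than the trivialized description. With that caveat your proof is complete; the paper's is shorter essentially because the topology on $\Gamma(TG)$ was set up so that $\mathcal R$ is tautologically bounded, while your argument would generalize to settings where no global trivialization is available.
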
	
\begin{proof} We have to argue that $\mathcal R$ and $\mathcal R^{-1}$ send bounded sets into bounded sets. Of course, we refer here to the von Neumann bornology corresponding to the natural topologies on $ C^{\infty}(G,\mathfrak{g})$ and $ \Gamma(TG)$, presented in the Appendix. For $\mathcal R^{-1}$ one gets the conclusion observing that $\mathcal R^{-1}(\xi)=\kappa^r( \xi) ,$ for $\xi\in\Gamma(TG),$ and the insertion mapping $i:\Gamma(TG)\times \Omega^1(G,\mathfrak g)\to \Omega^0(G,\mathfrak g):=C^\infty(G,\mathfrak g)$ is convenient smooth. 
	
	Since $TG$ is trivializable, let us denote by $f_\alpha: \Gamma(TG)\to C^\infty(\RR,\mathfrak g),$ the mappings  with respect to which $\Gamma(TG)$ has the initial topology (see the Appendix B) 
	$$f_\alpha:=c^*\circ (u_\alpha^{-1})^*\circ (pr_2\circ \rho)_*\circ (i_{\mathcal U_\alpha})^*.$$
	 Let us also denote by $g_\alpha:C^\infty(G,\mathfrak g)\to C^\infty(\RR,\mathfrak g)$  the mappings that give the topology on $C^\infty(G,\mathfrak g)$. Then $\mathcal R$ is bounded if and only if $f_\alpha\circ \mathcal R$ is bounded, since the von Neumann bornology coincides with the initial bornology induced by $f_\alpha$ on $\Gamma(TG)$ (cf. \cite{Gach}). But $f_\alpha\circ \mathcal R=g_\alpha \circ Id_{C^\infty(G,\mathfrak g)}$, thus $\mathcal R$ is bounded.
		
\end{proof}

	As a consequence $\mathcal R$ and $\mathcal R^{-1}$ are convenient smooth. In order to avoid some cumbersome expressions we  will use, in the next proposition, the same notation for the vector field and the curve attached via the isomorphism $\mathcal R.$ Relation \eqref{nablaR}  should be  written as $\nabla _{\mathcal R_{\widebar  X}}\mathcal R_{\widebar Y}:=\mathcal R_{\widebar{\nabla_X Y}},$ for some mappings $ \widebar X,\widebar Y, \overline{\nabla_X Y }\in \mathrm{C}^{\infty}(G,\mathfrak{g})$. Because the flat mapping is only injective the idea is to build the below candidate and to prove that this is the unique Levi-Civita connection, related to the right-invariant metric  \eqref{metric}.

\begin{prop}\label{metricconnection} Assume that for all $u\in\mathfrak{g}$ the adjoint $\ad^T_u$ with respect to the bounded inner product $\langle\cdot,\cdot \rangle_e$ exists and that $u\mapsto \ad_u^T$ is bounded. Then the Levi-Civita connection related to the metric \eqref{metric} exists and is given by
			\begin{equation}\label{nablaR}
			\nabla _{\mathcal R_{ X}}\mathcal R_{Y}:=\mathcal R_{\nabla_X Y},\hspace{0,5cm} \nabla_X Y \in \mathrm{C}^{\infty}(G,\mathfrak{g}),\end{equation}
			where
		\begin{equation}\label {R_X}
	\nabla_X Y(g):=T_g Y(\mathcal R_X(g))-\frac{1}{2}\ad_{X(g)}Y(g)+\frac{1}{2}\ad^T_{X(g)}Y(g)+\frac{1}{2}\ad^T_{Y(g)}X(g), \end{equation}
			for $X,Y\in \mathrm{C}^{\infty}(G,\mathfrak{g}),$ $g\in G.$
		\end{prop}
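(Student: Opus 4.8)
The plan is to exhibit the Levi-Civita connection explicitly rather than extract it from the metric: because \eqref{metric} is only a weak metric, the flat mapping $v_g\mapsto\langle v_g,\cdot\rangle_g$ is merely injective (as already noted), so it cannot serve to \emph{define} $\nabla$. Instead one writes down the candidate \eqref{R_X}, checks that it transports through the bornological isomorphism $\mathcal R$ of Proposition \ref{bornologicaliso} into a genuine convenient linear connection on $TG$, verifies that it is torsion-free and compatible with \eqref{metric}, and then deduces existence \emph{and} uniqueness from the Koszul formula. Here ``connection'', ``torsion'' and ``metric compatibility'' are understood in the convenient sense of \cite{Michor}.

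First I would decompose the right-hand side of \eqref{R_X} as $\nabla_X Y=\nabla^0_X Y+S(X,Y)$, where $\nabla^0_X Y(g):=T_g Y(\mathcal R_X(g))$ is the flat connection associated with the right trivialization $\rho=(\pi_G,\kappa^r):TG\to G\times\mathfrak{g}$, and
\[ S(X,Y)(g):=-\tfrac12\ad_{X(g)}Y(g)+\tfrac12\ad^T_{X(g)}Y(g)+\tfrac12\ad^T_{Y(g)}X(g). \]
That $\nabla^0$ is a convenient linear connection is standard for trivial bundles \cite{Michor}; the only thing to record is that $g\mapsto T_g Y(\mathcal R_X(g))$ is convenient smooth into $\mathfrak{g}$, which follows from Proposition \ref{bornologicaliso} --- so that $\mathcal R_X\in\Gamma(TG)$ --- together with convenient smoothness of $TY:TG\to T\mathfrak{g}=\mathfrak{g}\times\mathfrak{g}$ and of the projection onto the second factor. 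The map $S$ is pointwise $\mathbb R$-bilinear in the pair $\bigl(X(g),Y(g)\bigr)$, hence tensorial, and it is convenient smooth in $g$ because $u\mapsto\ad_u$ is a bounded linear map $\mathfrak{g}\to\mathrm{L}(\mathfrak{g},\mathfrak{g})$ (it is $T_e\Ad$) while $u\mapsto\ad^T_u$ is bounded \emph{by hypothesis}; cartesian closedness then makes $g\mapsto\ad_{X(g)}Y(g)$ and the two $\ad^T$-terms convenient smooth. Since adding a tensor field to a connection produces a connection, the rule $\nabla_{\mathcal R_X}\mathcal R_Y:=\mathcal R_{\nabla^0_X Y+S(X,Y)}$ defines a convenient linear connection on $TG$, and \eqref{nablaR}--\eqref{R_X} make sense.

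Next comes torsion-freeness and metric compatibility. For the torsion I would use the formula for the Lie bracket of two vector fields in the right trivialization --- a direct consequence of the right Maurer--Cartan equation, exactly as employed in the proof of Theorem \ref{Arnold}, via Cartan's identity $(d\kappa^r)(\xi,\eta)=\xi\bigl(\kappa^r(\eta)\bigr)-\eta\bigl(\kappa^r(\xi)\bigr)-\kappa^r([\xi,\eta])$ --- namely $[\mathcal R_X,\mathcal R_Y]=\mathcal R_{T Y(\mathcal R_X)-T X(\mathcal R_Y)-\ad_X Y}$, so that the torsion of $\nabla^0$ equals $\mathcal R_{\ad_X Y}$. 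Since $\ad$ is antisymmetric, $S(X,Y)-S(Y,X)=-\ad_X Y$ (the $\ad^T$-terms being symmetric in $X,Y$), and the two contributions cancel, so $\nabla$ is torsion-free. For metric compatibility, right-invariance of \eqref{metric} gives $\langle\mathcal R_Y,\mathcal R_Z\rangle_g=\langle Y(g),Z(g)\rangle_e$, and since $\langle\cdot,\cdot\rangle_e$ is bounded bilinear --- hence convenient smooth --- its directional derivative along $\mathcal R_X$ is $\langle T Y(\mathcal R_X),Z\rangle_e+\langle Y,T Z(\mathcal R_X)\rangle_e$ by the product rule; these are precisely the contributions of the $\nabla^0$-parts of $\langle\nabla_X Y,Z\rangle_e+\langle Y,\nabla_X Z\rangle_e$, while the $S$-parts cancel by the identity $\langle S(X,Y),Z\rangle_e+\langle Y,S(X,Z)\rangle_e=0$, which --- after feeding in the defining relation $\langle\ad^T_u v,w\rangle_e=\langle v,\ad_u w\rangle_e$ and the antisymmetry of $\ad$ --- reduces to the same bookkeeping as in the classical finite-dimensional computation.

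Finally, uniqueness: any torsion-free connection compatible with \eqref{metric} satisfies the Koszul formula
\[ 2\langle\nabla_{\mathcal R_X}\mathcal R_Y,\mathcal R_Z\rangle=\mathcal R_X\langle\mathcal R_Y,\mathcal R_Z\rangle+\mathcal R_Y\langle\mathcal R_Z,\mathcal R_X\rangle-\mathcal R_Z\langle\mathcal R_X,\mathcal R_Y\rangle+\langle[\mathcal R_X,\mathcal R_Y],\mathcal R_Z\rangle-\langle[\mathcal R_Y,\mathcal R_Z],\mathcal R_X\rangle+\langle[\mathcal R_Z,\mathcal R_X],\mathcal R_Y\rangle, \]
whose right-hand side involves only the metric and the Lie bracket; evaluating at $g$ and letting $\mathcal R_Z$ vary so that $\mathcal R_Z(g)$ exhausts $T_gG$, positive-definiteness of $\langle\cdot,\cdot\rangle_e$ pins $\nabla_{\mathcal R_X}\mathcal R_Y(g)$ down uniquely. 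As the $\nabla$ built above is torsion-free and metric it satisfies this identity, hence it is the unique Levi-Civita connection and \eqref{R_X} is its expression. I expect the main obstacle to lie not in the algebra but in the convenient-calculus bookkeeping: checking that \eqref{R_X} genuinely takes values in $C^\infty(G,\mathfrak{g})$ --- which is exactly where the boundedness of $u\mapsto\ad^T_u$ is spent --- and that the classical manipulations (the product rule for $g\mapsto\langle Y(g),Z(g)\rangle_e$, Cartan's identity for $d\kappa^r$, transport through $\mathcal R$) are all licensed in the convenient setting. By contrast the Koszul uniqueness step, which looks delicate because the flat mapping is only injective, is in fact routine once one notes that injectivity together with positive-definiteness and the surjectivity of evaluation at $g$ onto the fibre $T_gG$ is all that is needed.
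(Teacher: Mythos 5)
Your proposal is correct and follows exactly the route the paper intends: the paper's own ``proof'' is only the citation ``See Section 46.5 in \cite{Michor}'', and both that reference and the paper's surrounding discussion (``the idea is to build the below candidate and to prove that this is the unique Levi-Civita connection'') proceed precisely as you do --- exhibit \eqref{R_X} as flat connection plus a tensorial $\ad$/$\ad^T$ correction, check torsion-freeness via the bracket in the right trivialization and metric compatibility via the defining relation of $\ad^T$, and get uniqueness from the Koszul formula together with injectivity of the flat map. Your algebraic verifications (the cancellations $S(X,Y)-S(Y,X)=-\ad_XY$ and $\langle S(X,Y),Z\rangle_e+\langle Y,S(X,Z)\rangle_e=0$) and your identification of where the boundedness of $u\mapsto\ad_u^T$ is spent are all sound.
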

		
		\begin{proof} See Section 46.5 in \cite{Michor}.\end{proof}

		\begin{rem} This formula coincides pointwise with the corrected version of the formula given  in \cite{Constantin}, for the particular case $G=\DiffS$
			$$(\nabla_X Y)_{\f}=[X, Y-Y^R_{\f}]_{\f}+\frac{1}{2}\left([X^R_{\f},Y_{\f}^R]_{\f}+B(X^R_{\f},Y^R_{\f})_{\f}+B(Y^R_{\f},X^R_{\f})_{\f}\right),$$
			where, for $X\in \Gamma(\mathrm T\DiffS),$ the term $X_{\f}^R$ denotes the right-invariant vector field whose value at $\f\in \DiffS$ is $X_{\f}$, thus $X_{\f}^R=\mathcal R_{X_{\f}\circ \f^{-1}}$ and the operator $B(u,v):=\ad^T_vu$ was extended to the family of right-invariant vector fields by $B(Z,W)_{\f}=B(Z_{id},W_{id})\circ\f.$  
		\end{rem}

\section{The geodesic spray} \label{Sec:6}

An interesting phenomenon concerning the Euler-Poincar\'e-Arnold equations occurs for some Fr\'echet-Lie groups:   the propagator of the  evolution equation which describes the geodesic flow (Lagrangian coordinates) has better properties than the one corresponding to the  Euler-Arnold equation (Eulerian coordinates).  

We will exemplify this remark by considering  the case $G=\DiffS$, with a right-invariant metric constructed as in \eqref{notations} and  A a pseudo-differential operator of H\"ormander class $S^r_{1,0}$,  $r\geq 1$, as in \cite{Cismas2016}. The spray equation will be
\begin{equation}\label{sprayeq}\begin{cases} \f_t=v\\
v_t=S_{\f}(v) \end{cases},
\end{equation}
where
\begin{equation*}S_{\f}(v)=R_{\f}\circ S\circ R_{\f^{-1}}(v),\end{equation*}
and
\begin{equation*}S(u)=A^{-1}\{[A,u]D(u)+u[A,D](u)-2A(u)D(u)\}.
\end{equation*}
with $D:=\frac{d}{dx}$ on $S^1$, defined as in \cite{GR2007}. 

The mapping $\mathrm F:\mathrm T\DiffS\rightarrow \mathrm{TT}\DiffS,$ locally defined by
\begin{equation}\label{spraystandard}F(\f,v):=(\f,v,v,S_{\f}(v)),\end{equation}
extends to a smooth mapping  $F:T\D{q}\rightarrow TT\D{q}$, mostly because of the commutators that appear in the above expressions. Hence, it is possible to recast the spray equation as an ODE on suitable Hilbert spaces and to work on the Hilbert approximations of the ILH Lie group $ \DiffS.$ 
This phenomenon is  exploited in order to obtain the existence of an  integral curve of the geodesic spray, see \cite{BauerEscherKolev, Cismas2016, Constantin, EM1970, EKHS, Kohlmann}.

On the other hand, denoting $\f_t=u\circ \f$, one discovers via Theorem \ref{Arnold} that  $\f$ satisfies the spray equation iff the Eulerian velocity $u$ satisfies the the  Euler-Arnold equation
$$u_t= -A^{-1}\{ u \cdot (Au)_x+2 Au \cdot u_x\}.$$ 
Unfortunately, this equation has a derivative loss when considered on the Hilbert approximations, thus it needs some Nash-Moser schemes in order to be solved in the smooth category.  Such an approach is used in \cite{Cismas2019}, with a comparison between these two methods.

The above discussion contains the essence of the so called "geometric method in hydrodynamics"\cite{Escher-Kolev}. Most of the papers concerned with this geometric method do not define rigorously the  spray in the case of a Fr\'{e}chet manifold. This is what we intend to do in the sequel. We are interested here only in sprays related to a right-invariant metric on a regular Lie group. 
There is not a direct way to define a spray  on a Fr\'{e}chet \hspace{-0,1cm}-Lie group using the concept of G\^ateaux smoothness, without introducing some conventions or losing generality.  The convenient calculus permits us to define a geodesic spray on Fr\'{e}chet manifolds and afterwards one can use the ILH structure of the Lie group for proving the existence of an integral curve. 

\subsection{Sprays on Banach manifolds}
In the case of a Banach manifold $M$ modelled on a Banach space $\mathbb{E}$, in order to define the spray related to a metric, the classical approach is to use  the flat mapping $$\widehat{g}:T_pM\rightarrow T_p^*M,\hspace{1cm}\xi\rightarrow g(p)(\xi, \cdot),$$
where $g$ is a smooth Riemannian metric on $M.$ On the cotangent bundle of $M$ we can define the canonical Liouville 1-form by
$$\Theta_{\omega}(X):=\omega(T\pi^* (X)), \hspace{1cm}\omega \in T_x^*M, X\in T_{\omega}(T^*M), $$
where $\pi^* :T^*M\rightarrow M$ is the canonical projection. There is also a canonical symplectic form on $T^*M$ obtained as
 $$\Omega=-d \Theta,$$ 
 where $d$ is the exterior derivative of a 1-form.

We can pullback the Liouville form by the flat mapping  $\widehat{g}$ to obtain a 1-form $\Theta^g$ on $TM$
$$\Theta^g_{\xi}(X):= g(p)(\xi,T\pi_M(X)), \hspace{1cm}\xi\in TM, X\in T_{\xi}(TM),$$
and further a symplectic form on $TM$
$$\Omega^g:=-d\Theta^g.$$

If the metric is strong we can associate to every function $H$ on $TM$ a Hamiltonian vector field $F_H$ on $TM$ defined as
\begin{equation}\label{Hamiltonian} d_{\xi}H(X):=\Omega^g(F_H(\xi),X),
\end{equation} where $\xi\in TM$ and $ X\in T_{\xi}(TM).$
If the metric is weak the flat mapping and the symplectic form $\Omega^g$ are only injective and thus given a function $H$ on $TM$ the Hamiltonian vector field corresponding to it may not exist, but if exists it is given by the above relation \eqref{Hamiltonian}.
\begin{defn}(see \cite{Lang99}) The geodesic spray $F$ associated to a strong  metric $g$ is defined as the Hamiltonian vector field of the energy function
	$$E(\xi):=\frac{1}{2} g(p)(\xi,\xi).$$
\end{defn}

In a local chart $U_{\mathbb{E}}\times\mathbb{E}$ of $TM$ the Hamiltonian vector field $F$ is
$$F(x,v):=(x,v,v, S(x,v)),$$ where $S(x,v)$ is defined by
$$g(x)(S(x,v),u)= \frac{1}{2} D_xg (u) (v,v)-D_x g(v)(v,u),$$
for $x\in U_{\mathbb{E}}\subseteq \mathbb{E} $ and $u,v\in\mathbb{E},$ where $D_xg$ represents the Fr\'{e}chet derivative of the local representative of the metric.
Since the flat mapping $\widehat{g}$ is bijective one gets
$$S(x,v)=pr_2[ \widehat{g}^{-1}(x, \frac{1}{2} D_xg (u) (v,v)-D_x g(v)(v,u))].$$

\subsection{Sprays for the geometric method}

 By $TM$ we will denote the kinematic tangent bundle of an infinite dimensional manifold, modeled on a convenient vector space $\mathbb E.$

\begin{defn} We define a spray $F$ to be a convenient smooth section of both $ \pi_ {TM}:TTM\rightarrow TM$ and $T\pi_M :TTM\rightarrow TM$ (symmetric vector field) which satisfies the quadratic condition
	$$F\circ m_{\lambda}^{TM}=Tm_{\lambda}^{TM}\circ m_{\lambda}^{TTM}\circ F,$$
	where  $m_{\lambda}^{TM}, m_{\lambda}^{TTM}$ denote the fiber scalar multiplications.\end{defn}

Let $G$ be a  regular convenient Lie group and $g$ a convenient smooth right-invariant metric defined as in \eqref{metric} by a bounded inner product $\langle \cdot, \cdot \rangle$. Generally, a Riemannian metric $g$ on an convenient manifold $G$ can be defined as a convenient smooth section of the vector bundle $L(TG\oplus TG,G\times\RR)$, that gives a positive definite and symmetric bilinear form $g(p)(\cdot,\cdot)$ on each tangent space $T_pG,$ $p\in G.$
Let us consider the following mapping on $TTG$
$$\Theta^g_\xi(X):=g(\xi,T_\xi\pi_G(X))=\langle \kappa^r(\xi), (\pi_G^*\kappa^r)_\xi(X)\rangle,\hspace{0,2cm}X\in T_\xi TG,$$
where   $\langle\cdot,\cdot\rangle$ is the inner product \eqref{notations},  $\kappa^r$  the right Maurer-Cartan form and the mapping $\pi_G: TG\to G$ is the canonical projection.   $\Theta^g$ is a convenient smooth section of the convenient convenient smooth vector bundle $L(TTG,TG\times\RR)$, thus  a kinematic 1-form on the kinematic tangent bundle $TG,$ when $G$ is a  regular convenient Lie group.  The smoothness of $\xi\to \kappa^r(\xi)$ from $\Gamma(TG)$ to $C^\infty(G,\mathfrak g)$ was justified in the previous section. Now, one can define a kinematic 2-form on $TG$ by
$$\omega^g(Y,X):=-d\Theta^g(Y,X).$$
To every right-invariant metric on a regular  convenient Lie group $G$ one can associate the energy function $E:TG\rightarrow \RR$
$$E(\xi):=\frac{1}{2}g(\xi,\xi)=\langle \kappa^r(\xi), \kappa^r(\xi)\rangle, \hspace{0,5cm}\xi\in TG.$$

\begin{prop} If there exists a  vector field $F$ on the kinematic tangent bundle $TG$ satisfying
	\begin{equation}\label{spray}i_F\omega^g=dE,\end{equation} then it is unique  and it is a right-invariant spray.
\end{prop}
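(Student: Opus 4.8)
The plan is to establish three things in turn: uniqueness of $F$, that $F$ is a symmetric vector field (i.e. a section of both $\pi_{TG}$ and $T\pi_G$), and that $F$ satisfies the quadratic/homogeneity condition, so that $F$ is a spray; right-invariance will follow from the right-invariance of all the data used in its construction.

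First, \emph{uniqueness}. Suppose $F_1$ and $F_2$ both satisfy $i_F\omega^g = dE$. Then $i_{F_1-F_2}\omega^g = 0$, so it suffices to show that $\omega^g$ is weakly nondegenerate on $TG$, i.e. that $X\mapsto i_X\omega^g$ is injective on each fibre $T_\xi TG$. Here I would unwind $\omega^g = -d\Theta^g$ in a right-trivialized picture: using $\rho = (\pi_G,\kappa^r)\colon TG \to G\times\mathfrak g$ and the bornological isomorphism $\mathcal R$ of Proposition \ref{bornologicaliso}, the tangent bundle $TTG$ gets identified with $(G\times\mathfrak g)\times(\mathfrak g\times\mathfrak g)$, and $\Theta^g$ becomes, at a point $(g,u)$, the $1$-form $(\delta g,\delta u)\mapsto \langle u, \kappa^r(\delta g)\rangle$ (where $\kappa^r(\delta g)\in\mathfrak g$ is the right-logarithmic derivative of the base displacement). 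Computing $d\Theta^g$ then produces, modulo $\ad$-terms coming from the Maurer–Cartan equation $d\kappa^r = -\tfrac12[\kappa^r,\kappa^r]$, a $2$-form whose "mixed" block is essentially $(\delta u,\delta'g)\mapsto \langle \delta u,\kappa^r(\delta' g)\rangle$, with $\langle\cdot,\cdot\rangle$ the given \emph{bounded, positive-definite} inner product on $\mathfrak g$. Because this inner product is positive definite, the flat map $u\mapsto\langle u,\cdot\rangle$ is injective, and one reads off that $i_X\omega^g = 0$ forces $X=0$; hence $F$, if it exists, is unique. This is the step I expect to be the main obstacle: one must be careful that all the trivializations and the exterior derivative are taken in the kinematic (convenient) sense, that $\Theta^g$ really is a kinematic $1$-form (which the paper has already argued, via smoothness of $\xi\mapsto\kappa^r(\xi)$ from $\Gamma(TG)$ to $C^\infty(G,\mathfrak g)$), and that no appeal to a nonexistent strong nondegeneracy is made — only injectivity, exactly as in the weak-metric Banach case recalled in Section \ref{Sec:6}.

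Second, that $F$ is \emph{symmetric}, i.e. $T\pi_G\circ F = \id_{TG} = \pi_{TG}\circ F$. The condition $\pi_{TG}\circ F = \id$ is automatic since $F$ is a vector field on $TG$. For $T\pi_G\circ F = \id$, I would evaluate the defining identity $i_F\omega^g = dE$ against vertical vectors $X\in \ker T\pi_G \subseteq T_\xi TG$: from the explicit form of $\Theta^g_\xi(X) = \langle\kappa^r(\xi),(\pi_G^*\kappa^r)_\xi(X)\rangle$ one sees $\Theta^g$ vanishes on verticals, and a short computation of $d\Theta^g(F,X)$ for $X$ vertical, together with $dE(X) = \langle \kappa^r(\xi), d\kappa^r(X)\rangle$ computed in the trivialization, yields precisely that the base component of $F(\xi)$ equals $\xi$ itself. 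This is the standard "$\Theta$ is the tautological/Liouville form, so its differential forces the second-order condition" argument, transported to the convenient setting; it is routine once the trivialized formula for $\Theta^g$ is in hand.

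Third, the \emph{quadratic condition} $F\circ m_\lambda^{TG} = Tm_\lambda^{TG}\circ m_\lambda^{TTG}\circ F$. Here I would use that $E$ is fibrewise a quadratic form: $E\circ m_\lambda^{TG} = \lambda^2 E$, hence $d(E\circ m_\lambda^{TG}) = \lambda^2\, dE$, while a direct check of how $\Theta^g$ — being fibre-linear in $\xi$ — transforms under $m_\lambda^{TG}$ gives $(m_\lambda^{TG})^*\Theta^g = \lambda\,\Theta^g$ pulled back suitably, so $(m_\lambda^{TG})^*\omega^g$ scales by $\lambda$ as well. Feeding the uniqueness from the first step into the pulled-back equation $i_{\widetilde F}\big((m_\lambda^{TG})^*\omega^g\big) = d(E\circ m_\lambda^{TG})$ identifies the transformed vector field with $Tm_\lambda^{TG}\circ m_\lambda^{TTG}\circ F$, which is the homogeneity condition. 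Finally, \emph{right-invariance}: since the metric \eqref{metric} is right-invariant and $\kappa^r$ is right-invariant, $\Theta^g$, $\omega^g$ and $E$ are all invariant under $TR_g$; by the uniqueness just proved, $F$ must be $TR_g$-related to itself, i.e. right-invariant. Assembling the three properties shows $F$ is a right-invariant spray, completing the proof.
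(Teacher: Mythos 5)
Your argument is correct and has the same skeleton as the paper's proof: uniqueness from weak nondegeneracy of $\omega^g$ (reducing to injectivity of the flat map of the bounded positive-definite inner product), symmetry of $F$ by testing $i_F\omega^g=dE$ against vertical vectors, quadratic homogeneity, and right-invariance from invariance of $E$ and $\omega^g$ plus uniqueness. The execution differs in two places. The paper works entirely in a chart: writing $F(x,v)=(x,v,S_1(x,v),S_2(x,v))$ and $X=(x,v,u,w)$, it expands $i_F\omega^g=dE$ via Remark \ref{remarca metrica}, sets $u=0$ (your vertical test vectors) to get $S_1(x,v)=v$, and then the resulting identity
$$g(x)(S_2(x,v),u)=\tfrac{1}{2}d_xg(u)(v,v)-d_xg(v)(v,u)$$
exhibits $S_2$ as quadratic in $v$ immediately, since the right-hand side visibly is; no further argument is needed. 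You instead prove homogeneity globally from $(m_\lambda^{TG})^*\Theta^g=\lambda\,\Theta^g$, $E\circ m_\lambda^{TG}=\lambda^2E$ and the already-established uniqueness; this is a valid, more invariant route (no circularity, since uniqueness is proved first), at the cost of an extra pullback computation. Likewise you phrase the nondegeneracy of $\omega^g$ in the right-trivialized picture via the Maurer--Cartan form rather than in a chart; both give the same "mixed block" $g(x)(b_2,a_1)-g(x)(b_1,a_2)$, from which one first kills the horizontal component of a null vector and then the vertical one. One cosmetic caution: your expression $dE(X)=\langle\kappa^r(\xi),d\kappa^r(X)\rangle$ should be read as pairing $\kappa^r(\xi)$ with the fibre component of $X$ in the trivialization (the derivative of $\xi\mapsto\kappa^r(\xi)$ along $X$), not as the exterior derivative of the one-form $\kappa^r$; with that reading the vertical test computation matches the paper's $dE(X)=g(x)(v,w)$ at $u=0$.
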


\begin{proof}
	If $F(x,v)=(x,v, S_1(x,v), S_2(x,v))$ is the local representative of the  vector field $F$ then, using Remark \ref{remarca metrica} in the Appendix, the local expression of $i_F\omega^g=dE$ is
    $$d_xg(u)(v, S_1(x,v))+g(x)(w, S_1(x,v))-d_xg(S_1(x,v))(v,u)-g(x)(S_2(x,v),u)$$
	$$=\frac{1}{2}d_xg(u)(v,v)+g(x)(v,w),$$ for $X=(x,v,u,w).$
	Choosing $u=0$ implies $S_1(x,v)=v.$ Hence $F$ is a symmetric  vector field and $S_2(x,v)$ satisfies
	$$g(x)(S_2(x,v),u)=\frac{1}{2}d_xg(u)(v,v)-d_xg(v)(v,u).$$
	Finally $S_2(x,\lambda v)=\lambda^2 S_2(x,v)$ and $S_2$ is quadratic in $v,$ thus $F$ is a spray.
	
	The vector field (if exists) defined by \eqref{spray} has to be unique because $g$ is non-degenerate and one can easily see that $F$ is actually invariant under any isometry of the metric \eqref{metric} because $E$ and $\omega^g$ are.
\end{proof}

There are two possible trivializations of $TTG\cong T(G\circledS\mathfrak g)$, the first one is 
$$tr^1_{TTG}(\xi_g,\xi_u)=(g,u, \kappa^r(\xi_g), \xi_u+[u, \kappa^r(\xi_g)]), \ \ \ \ g\in G,  \xi_g\in T_gG, u\in \mathfrak g, \xi_u\in T_u\mathfrak g.$$
and is obtained decomposing  $T(G\circledS\mathfrak g)$ as the semidirect product between  $G\circledS \mathfrak g$ and $\mathfrak g\circledS\mathfrak g,$ see \cite{Esen-Gumral}. The second one is obtained using  $tr^2_{TTG}=\sigma\circ (\rho\times id)\circ T\rho$, where $\sigma (g,u,v,w)=(g,v,u,w)$ is the canonical involution. Thus $$tr^2_{TTG}(\xi_g,\xi_u)=(g,u,\kappa^r(\xi_g), \xi_u), \ \ \ \ \ \ \ \ g\in G, u\in \mathfrak g.$$
Of course, the symmetric vector fields on $TG$ are invariant under these two trivializations. We prove now that, when the Arnold operator exists, also a spray related to the right-invariant metric (\ref{metric}) exists. The formula is similar to the one obtained in \cite{Kouranbaeva} in a more restrictive setting. Propositions \ref{formula spray} and \ref{propspray} together with Theorem \ref{Arnold} and Proposition \ref{metricconnection}  form a unitary convenient approach of the so called geometric method in hydrodynamics.
\begin{prop}\label{formula spray} If the inner product \eqref{notations} is bounded and the Arnold operator $ad^T:\mathfrak{g}\times\mathfrak{g}\rightarrow\mathfrak{g}$
		exists and is convenient smooth then the mapping
		$$F: TG\rightarrow  TTG,$$
	defined, with respect to any of the two  trivializations of $TTG$, by
		\begin{equation}\label{spray existence formula}F(\xi):=(g, \kappa^r(\xi),  \kappa^r(\xi), -ad^T_{ \kappa^r(\xi)} \kappa^r(\xi) ), \hspace{0,5cm}\xi\in T_gG,\end{equation}
		is convenient smooth and satisfies the identity
		$$i_F\omega^g(X)=dE(X), \hspace{0,5cm}\forall \hspace{0,1cm} X\in TTG.$$
	\end{prop}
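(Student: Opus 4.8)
The plan is to verify the two asserted properties of $F$ separately: first that $F$ is a well-defined convenient smooth section of $TTG$, and second that it satisfies $i_F\omega^g = dE$. For the smoothness: the map $\xi\mapsto\kappa^r(\xi)$ is convenient smooth from $\Gamma(TG)$ to $C^\infty(G,\mathfrak g)$ (justified in Section \ref{Euler-Arnold on regular groups} via Proposition \ref{bornologicaliso}), and by hypothesis $\ad^T:\mathfrak g\times\mathfrak g\to\mathfrak g$ is convenient smooth. Composing with the (convenient smooth) trivialization $tr^2_{TTG}$ — which is built from $\rho$, $T\rho$ and the canonical involution $\sigma$, all of which are convenient smooth because $G$ is a regular convenient Lie group — shows that the right-hand side of \eqref{spray existence formula} defines a convenient smooth map $TG\to TTG$. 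I would also note that since the displayed formula in the second trivialization reads $F(\xi)=(g,u,u,-\ad^T_u u)$ with $u=\kappa^r(\xi)$, it is visibly a section of both $\pi_{TG}$ and $T\pi_G$ and is quadratic in $u$; so once \eqref{spray existence formula} is shown to agree with the abstract $F$ of the previous proposition, it is automatically a right-invariant spray.

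The heart of the argument is the identity $i_F\omega^g = dE$. Here I would use the local computation already carried out in the proof of the preceding proposition: in a chart, $i_F\omega^g = dE$ is equivalent to the pair of conditions $S_1(x,v)=v$ and
$$g(x)(S_2(x,v),u)=\tfrac12 d_xg(u)(v,v)-d_xg(v)(v,u),\qquad \forall u.$$
The first condition holds because the first two slots of \eqref{spray existence formula} are both $\kappa^r(\xi)$. For the second, I would translate everything to the Lie algebra using right-invariance of the metric: at the identity, $g$ is the constant bilinear form $\langle\cdot,\cdot\rangle_e$, and the derivative terms $d_xg(\cdot)$ pick up precisely the contributions coming from differentiating the right-translation, which is where $\ad$ enters. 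Concretely, differentiating $g(u_g,v_g)_g=\langle R_{g^{-1}}u_g,R_{g^{-1}}v_g\rangle_e$ along a curve produces terms $\langle\ad_w u,v\rangle_e$; using the defining relation $\langle\ad_w u,v\rangle_e=\langle u,\ad_w^T v\rangle_e$ for the Arnold operator, the right-hand side of the displayed condition becomes, after collecting terms, $\langle S_2,u\rangle_e$ with $S_2=-\ad^T_v v$. Comparing with \eqref{spray existence formula} (whose last slot is $-\ad^T_{\kappa^r(\xi)}\kappa^r(\xi)$) closes the identity. Since $g$ is non-degenerate on the regular dual, $F$ is then forced to coincide with the unique vector field of the previous proposition, giving the full conclusion.

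The main obstacle I expect is bookkeeping in the local/trivialized computation: one has to be careful about which trivialization of $TTG$ is being used (the two differ by the $[u,\kappa^r(\xi_g)]$ correction term in $tr^1_{TTG}$), about the factor-of-$\tfrac12$ and sign conventions in $\Theta^g$ and $\omega^g=-d\Theta^g$, and about keeping $d\Theta^g$ — a genuine kinematic exterior derivative on the convenient manifold $TG$ — matched with the abstract formula from the previous proposition rather than recomputing it from scratch. The cleanest route, which I would take, is to avoid recomputing $i_F\omega^g$ entirely and instead just verify that the $S_2$ coming from \eqref{spray existence formula} satisfies the algebraic characterization $g(x)(S_2,u)=\tfrac12 d_xg(u)(v,v)-d_xg(v)(v,u)$ already extracted in the proof of Proposition \ref{propspray}; everything else (uniqueness, right-invariance, spray property) is then inherited from that proposition, so the only real work is the Lie-algebra identity $\langle -\ad^T_v v,\,u\rangle_e = \tfrac12 d_xg(u)(v,v)-d_xg(v)(v,u)$.
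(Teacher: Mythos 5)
Your strategy is genuinely different from the paper's. You propose to reduce everything to the chart-level characterization $S_1(x,v)=v$ and $g(x)(S_2(x,v),u)=\tfrac12 d_xg(u)(v,v)-d_xg(v)(v,u)$ obtained in the (unlabelled) uniqueness proposition, and then to identify $S_2$ with $-\ad^T_vv$ by differentiating the right-invariant metric; note in passing that this local formula is proved there, not in Proposition \ref{propspray} as you cite. The paper never returns to charts: it computes $i_F\omega^g$ globally, using the formula $\omega^g(F,X)=-F(\Theta^g\circ X)+X(\Theta^g\circ F)+\Theta^g\circ[F,X]$ for the kinematic exterior derivative, the product rule for the bounded inner product, and the right Maurer--Cartan equation $d(\pi_G^*\kappa^r)(F,X)=\ad_{\kappa^r(T\pi_GF)}\kappa^r(T\pi_GX)$, after which the two $\ad^T$ contributions cancel and only $\langle X(\kappa^r)(\xi),\kappa^r(\xi)\rangle=d_\xi E(X(\xi))$ survives. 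Your smoothness argument for $F$ (composition of $\kappa^r$, $\ad^T$ and the trivialization $tr^2_{TTG}$) is fine and consistent with the paper.

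The gap is precisely at what you call ``the only real work'': the identity $\langle-\ad^T_vv,u\rangle_e=\tfrac12 d_xg(u)(v,v)-d_xg(v)(v,u)$ is asserted, not proved, and as written it does not even typecheck. The right-hand side lives in a manifold chart $U_{\E}\times\E$ of $TG$ (the setting in which the uniqueness proposition derives that condition), where the local representative of the metric is $g(x)(u,v)=\langle\kappa^r(T_x\f^{-1}u),\kappa^r(T_x\f^{-1}v)\rangle_e$; differentiating this in $x$ produces derivatives of the chart composed with right translation, not $\ad$ terms directly. The left-hand side lives in the right trivialization $tr^2_{TTG}$, which is a vector bundle trivialization not induced by any chart of $G$. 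Bridging the two is exactly where the Maurer--Cartan equation and the defining relation $\langle\ad_wu,v\rangle_e=\langle u,\ad^T_wv\rangle_e$ must enter, and carrying this out amounts to re-deriving the condition $i_F\omega^g=dE$ in the trivialization (keeping track of the bracket correction visible in $tr^1_{TTG}$) --- that is, to doing essentially the computation the paper performs globally. Until that step is written out, your argument establishes uniqueness, symmetry and the spray property of $F$ (inherited from the preceding proposition) but not the existence identity itself, which is the actual content of Proposition \ref{formula spray}.
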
 
	
	\begin{proof}  Let $\E$ be a convenient vector space and   $f: M\rightarrow \E $ a convenient  smooth  mapping.  Since, by \cite{Michor},  a kinematic vector field is also an operational vector field, then for a convenient smooth kinematic vector field $\xi $ one obtains a convenient smooth $\E$-valued mapping by $\xi(f):=pr_2\circ Tf\circ \xi.$ Here the right term is the differential of $f$, but in order to avoid another $d$ in our arguments we prefer this notation, compare to Section 28.15 in \cite{Michor}. The following formula holds
		$$\xi\left(\langle f, g \rangle \right)(p)= \langle \xi(f), g \rangle (p)+  \langle f, \xi(g) \rangle (p), \hspace{0,2cm}p\in M,$$
		for a bounded inner product $\langle \cdot, \cdot \rangle$ on $\E,$ and $f,g:M\to \E$ smooth.
		
	We will do the computations using the second trivialization of $TTG$. The kinematic vector field $F$ defined above is symmetric, hence we get $\pi_G^*\kappa^r(F(\xi))=\kappa^r(\xi).$
	The global formula holds for the exterior derivative of a kinematic differential form, according to Section 33.12 in \cite{Michor}, thus
			\begin{equation*}\begin{split}\omega^g_\xi(F(\xi), X(\xi))&=-F(\Theta^g\circ X)(\xi)+X(\Theta^g\circ F)(\xi)+(\Theta^g\circ [F,X])(\xi)\\
		&=-\langle F(\kappa^r)(\xi), \pi^*_G \kappa^r(X(\xi))\rangle-\langle \kappa^r(\xi), F(\pi^*_G \kappa^r(X))(\xi)\rangle\\
		&\quad +\langle X(\kappa^r)(\xi), \kappa^r(\xi)\rangle+\langle \kappa^r(\xi), X(\pi^*_G \kappa^r(F))(\xi)\rangle\\
		&\quad +\langle \kappa^r(\xi), \pi^*_G \kappa^r([F,X](\xi)) \rangle\\
		&=-\langle F(\kappa^r)(\xi), \pi_G^*\kappa^r(X(\xi))\rangle+\langle X(\kappa^r)(\xi), \kappa^r(\xi)\rangle \\
		&\quad - \langle \kappa^r(\xi), d(\pi_G^*\kappa^r)(F(\xi),X(\xi))\rangle.
		\end{split}
		\end{equation*}
		But
		\begin{equation*}\begin{split}
		d(\pi_G^*\kappa^r)(F,X)&=\pi_G^*\hspace{0,1cm} d\kappa^r(F,X)=\pi^*_G\left(\frac{1}{2}[\kappa^r,\kappa^r]_{\raisebox{-3pt}{\tiny $\wedge$}}(F,X)\right)\\
		&=[\kappa^r(T\pi_G(F)), \kappa^r(T\pi_G(X))]=\ad_{\kappa^r(T\pi_G(F))}\kappa^r(T\pi_G(X)),
		\end{split}\end{equation*}
		by applying the Maurer-Cartan equation.
		In  the same time  $T\pi_G(F(\xi))=\xi$ and $F(\kappa^r)(\xi)=pr_2\left( T_\xi\kappa^r(F(\xi))\right)=-\ad^T_{\kappa^r(\xi)}\kappa^r(\xi),$ since
		$\sigma\circ (\rho\times id)\circ(T\pi_G,T\kappa^r) $ gives the chosen trivialization of  $TTG.$
		Eventually
		\begin{equation*}\begin{split}\omega^g_\xi(F(\xi), X(\xi))=&-\langle -\ad^T_{\kappa^r(\xi)}\kappa^r(\xi), \kappa^r(T\pi_G(X(\xi))\rangle+\langle X(\kappa^r)(\xi), \kappa^r(\xi)\rangle
		\\&-\langle \ad^T_{\kappa^r(\xi)}\kappa^r(\xi), \kappa^r(T\pi_G(X(\xi))\rangle\\
		=&\langle X(\kappa^r)(\xi), \kappa^r(\xi)\rangle=d_\xi E(X(\xi)).
		\end{split}
		\end{equation*}
		\end{proof}
	Written in the manner of Proposition \ref{metricconnection} the above result becomes
	
	\begin{prop}\label{propspray}  Assume that for all $u\in\mathfrak{g}$ the adjoint $\ad^T_u$ with respect to the bounded inner product $\langle\cdot,\cdot \rangle_e$ exists and that $u\mapsto \ad_u^T$ is bounded. Then the metric spray is given by
		$$\displaystyle F=\mathcal R_{\widebar S},$$
		where $\mathcal R:C^{\infty}(TG,T\mathfrak g)\rightarrow  \Gamma(TTG)$ is the canonical isomorphism and the mapping $\widebar S:TG\rightarrow T\mathfrak g$ is defined as
		$$\displaystyle \widebar {S}(\xi)=\left(\kappa^r (\xi),-\ad^T_{\kappa^r(\xi)}\kappa^r(\xi)\right), \quad \xi\in TG.$$
		\end{prop}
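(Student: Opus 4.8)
The plan is to recognise the statement as a trivialised reformulation of Proposition \ref{formula spray}. First I would note that $TG$ is itself a regular convenient Lie group, isomorphic as such to the semidirect product $G\circledS\mathfrak g$, whose Lie algebra $\mathfrak g\circledS\mathfrak g$ we identify canonically with $T\mathfrak g$ (the tangent space of the vector space $\mathfrak g$ at any point being $\mathfrak g$ itself). Applying Proposition \ref{bornologicaliso} to the group $TG$ then produces the bornological isomorphism $\mathcal R:C^{\infty}(TG,T\mathfrak g)\to\Gamma(TTG)$, $\mathcal R_{\widebar S}(\xi)=R^{TG}_\xi(\widebar S(\xi))$, induced by the right trivialization $\rho^{TG}=(\pi_{TG},\kappa^r_{TG}):TTG\to TG\times(\mathfrak g\circledS\mathfrak g)$. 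The key observation is that $\rho^{TG}$ is precisely the trivialization $tr^2_{TTG}=\sigma\circ(\rho\times\id)\circ T\rho$ introduced above --- and not $tr^1_{TTG}$, whose last component carries the extra bracket term $[u,\kappa^r(\xi_g)]$ --- because $tr^2_{TTG}$ is exactly the right Maurer--Cartan form of the semidirect product $G\circledS\mathfrak g$, cf. \cite{Esen-Gumral}.

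Next I would check that $\widebar S$ is convenient smooth. Since $\kappa^r=pr_2\circ\rho$ and $\rho$ is a diffeomorphism, the map $\xi\mapsto\kappa^r(\xi)$ is convenient smooth from $TG$ to $\mathfrak g$; by hypothesis $\ad^T:\mathfrak g\times\mathfrak g\to\mathfrak g$ is bounded bilinear, hence convenient smooth by Theorem \ref{derivata}; and cartesian closedness then gives that $\xi\mapsto(\kappa^r(\xi),-\ad^T_{\kappa^r(\xi)}\kappa^r(\xi))$ is a convenient smooth map $TG\to\mathfrak g\times\mathfrak g=T\mathfrak g$. Thus $\widebar S\in C^{\infty}(TG,T\mathfrak g)$, and since $\mathcal R$ is convenient smooth (Proposition \ref{bornologicaliso}), $F:=\mathcal R_{\widebar S}$ is a convenient smooth vector field on $TG$, i.e. $F\in\Gamma(TTG)$.

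Finally I would identify $F$ with the spray of Proposition \ref{formula spray}. Writing $\xi\in T_gG$ as $(g,u)$ with $u=\kappa^r(\xi)$ in the trivialization $tr^2_{TTG}$, the defining relation $\mathcal R_{\widebar S}(\xi)=R^{TG}_\xi(\widebar S(\xi))$ reads off, in the same trivialization, as
\begin{equation*}
F(\xi)=\bigl(g,\kappa^r(\xi),\kappa^r(\xi),-\ad^T_{\kappa^r(\xi)}\kappa^r(\xi)\bigr),
\end{equation*}
which is exactly the local representative \eqref{spray existence formula}. By Proposition \ref{formula spray} this vector field satisfies $i_F\omega^g=dE$, and by the Proposition stated just before it --- which asserts that any solution of $i_F\omega^g=dE$ is the unique right-invariant spray --- it is therefore the metric spray. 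Hence the metric spray equals $\mathcal R_{\widebar S}$.

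The only genuine obstacle is the bookkeeping in the first paragraph: one must verify that the isomorphism $\mathcal R$ attached to the tangent group $TG$ via its right Maurer--Cartan form coincides with the trivialization $tr^2_{TTG}$ used in Proposition \ref{formula spray}, so that the ``second Lie-algebra component'' of $\mathcal R_{\widebar S}(\xi)$ is the bare pointwise value $-\ad^T_{\kappa^r(\xi)}\kappa^r(\xi)$ rather than the bracket-twisted expression that would arise from $tr^1_{TTG}$. Once this identification is in place, everything else is an unwinding of Propositions \ref{bornologicaliso} and \ref{formula spray}.
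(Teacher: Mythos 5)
Your proposal is correct and follows essentially the same route as the paper: the paper's own proof consists precisely of unwinding $\mathcal R_{\widebar S}(\xi)=R_\xi(\widebar S(\xi))$ and checking that the trivialization $\sigma\circ(\rho\times \id)\circ T\rho$ carries it to the local representative \eqref{spray existence formula}, after which Proposition \ref{formula spray} finishes the argument. You merely supply more of the bookkeeping (the identification of the Maurer--Cartan trivialization of $TTG$ with $tr^2_{TTG}$ and the smoothness of $\widebar S$) that the paper dismisses as ``straightforward.''
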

		\begin{proof} One has only to argue that
			$$\displaystyle \left(\sigma\circ (\rho\times id)\circ T\rho \right)(\mathcal R_{\widebar S}(\xi))=\left(g,\kappa^r (\xi), \kappa^r(\xi), -\ad^T_{\kappa^r(\xi)}\kappa^r(\xi)\right)$$
		which is straightforward, since by its very definition
		$$\mathcal R_{\widebar S}(\xi)=R_\xi (\widebar S (\xi))=\left(\xi,\kappa^r(\xi), \ad^T_{\kappa^r(\xi)}\kappa^r(\xi)\right)$$
		\end{proof}

			\begin{prop}\label{spray-metric} If the inner product \eqref{notations} is bounded and the operator $ad^T$ exists and is bounded, then a  smooth curve $g:\RR\rightarrow G$ is a geodesic of the right-invariant metric \eqref{metric} if and only if $\dot{g}(t):\RR\rightarrow TG $  is an integral curve of the right-invariant spray $F$ defined by \eqref{spray}, and we call it the geodesic spray corresponding to this metric.
			\end{prop}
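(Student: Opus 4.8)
The plan is to exhibit the equivalence as a transparent consequence of the computations already assembled, rather than re-deriving the geodesic equation from scratch. The key observation is that the spray $F$ of Proposition~\ref{formula spray} is a genuine spray (symmetric, quadratic) and that its integral curves, once projected to $G$, are exactly the curves whose Eulerian velocity solves the Euler-Arnold equation of Theorem~\ref{Arnold} with $B = \ad^T$. So the first step is to unwind what it means for $\dot g(t)$ to be an integral curve of $F$: writing $\xi(t) := \dot g(t) \in T_{g(t)}G$ and using the second trivialization $tr^2_{TTG}(\xi_g,\xi_u) = (g,u,\kappa^r(\xi_g),\xi_u)$, the condition $\tfrac{d}{dt}\xi(t) = F(\xi(t))$ becomes the system $\kappa^r(\dot g(t)) = \kappa^r(\dot g(t))$ (the symmetric part, automatically satisfied and encoding $\dot g = \xi$ plus $T\pi_G(F(\xi)) = \xi$) together with the second component $\tfrac{d}{dt}\kappa^r(\xi(t)) = -\ad^T_{\kappa^r(\xi(t))}\kappa^r(\xi(t))$. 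Setting $u(t) := \kappa^r(\dot g(t)) = R_{g(t)^{-1}}\dot g(t)$, this is precisely $u_t = -\ad^T_u u = -B(u,u)$, the Euler-Arnold equation, since $B(u,v) = \ad^T_v u$ was the Arnold operator identified in Theorem~\ref{Arnold}.

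The second step is to invoke Theorem~\ref{Arnold} directly: under the stated hypotheses (the inner product \eqref{notations} is bounded, $\ad^T$ exists and is bounded, hence convenient smooth by Theorem~\ref{derivata}), a smooth curve $g(t)$ on the regular convenient Lie group $G$ is a geodesic of the right-invariant metric \eqref{metric} if and only if its Eulerian velocity $u(t) = R_{g(t)^{-1}}\dot g(t)$ satisfies $u_t = -B(u,u)$ with $B = \ad^T$. Combining this with the reformulation from the first step yields: $g(t)$ is a geodesic $\iff$ $u(t) := \kappa^r(\dot g(t))$ solves $u_t = -\ad^T_u u$ $\iff$ $\dot g(t)$ is an integral curve of $F$. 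One should also note that $\dot g(t)$ being an integral curve of $F$ is well-defined as a statement about the kinematic tangent bundle, because $F$ is a convenient smooth vector field on $TG$ by Proposition~\ref{formula spray}, and that right-invariance of $F$ (established there) guarantees the geodesic flow is right-invariant, which is exactly the property exploited by the geometric method.

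The only genuine point requiring care — the main obstacle — is the passage between the abstract ODE $\tfrac{d}{dt}\xi(t) = F(\xi(t))$ on the convenient manifold $TG$ and the trivialized system, i.e.\ checking that the trivialization $tr^2_{TTG}$ intertwines the tangent vector $\dot\xi(t) \in T_{\xi(t)}TG$ with the pair $\bigl(\kappa^r(\dot g(t)), \tfrac{d}{dt}\kappa^r(\xi(t))\bigr)$ up to the correction terms recorded in $tr^1_{TTG}$ versus $tr^2_{TTG}$. Here one uses that the symmetric vector field $F$ is invariant under both trivializations (as remarked just before Proposition~\ref{formula spray}), so the bracket correction $\xi_u + [u,\kappa^r(\xi_g)]$ distinguishing $tr^1$ from $tr^2$ does not affect the identification of integral curves; working throughout with $tr^2$, the second slot of $T\rho(\dot\xi)$ is literally $\tfrac{d}{dt}\kappa^r(\xi(t))$, so the identification is immediate. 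With that bookkeeping in place the proof is a two-line chain of equivalences and one simply cites Theorem~\ref{Arnold} and Proposition~\ref{formula spray}.
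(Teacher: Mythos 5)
Your proposal is correct and follows essentially the same route as the paper, whose entire proof is the one-line observation that the statement is a direct consequence of Theorem \ref{Arnold} and Proposition \ref{formula spray} via the identity $\kappa^r(\dot{g}(t))=R_{g(t)^{-1}}\dot{g}(t)$. You simply make explicit the trivialization bookkeeping that the paper leaves implicit, and your identification $B(u,u)=\ad^T_u u$ matches the Arnold operator exactly.
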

			\begin{proof} It is a straightforward consequence of Proposition \ref{Arnold} and of Proposition \ref{formula spray}, since $\kappa^r(\dot{g}(t))=R_{g(t)^{-1}}\dot{g}(t).$
			\end{proof}
			\begin{rem} 
				Since we  work on manifolds modelled on non normable locally convex spaces the existence of an integral curve for a smooth vector field is not granted. Most of the times we speculate the right invariance of the geodesic spray. It is a challenge to find new ways for proving the existence of  integral curves for a given spray,  motivated in practice by metrics which are not right-invariant. These kind of metrics can appear in fields like  image processing or shape analysis.
				
				 In order to prove the existence of an integral curve of the geodesic spray one has to visualize it in an appropriate way. The Lie group $G=\DiffS$ is a good ambient group for exemplifying the construction of Euler-Arnold's equation or  the connection compatible with a given right-invariant weak Riemannian metric, but not for aspects regarding the geodesic spray. The occurrence of some particular phenomena (existence of an universal cover, for example)  makes this group inappropriate if one wants to grasp the overall picture.  That's why we have to switch to the more general case $G=\DiffM,$ with $M$ a compact manifold.
				   
          \end{rem}

\subsection{The EPDiff equation}	Let us consider  a compact manifold $M$ and	the Lie group $G=\DiffM$ of diffeomorphisms isotopic to the identity. We define an inner product on its Lie algebra $\Gamma(TM)$, the set of all smooth sections of $TM$, via
\begin{equation*}
\langle u_{1},u_{2}\rangle := \int_{M} g\left( Au_{1},u_{2} \right) d\mu \,,
\end{equation*}
where $u_{1}, u_{2} \in \Gamma(TM)$, $g$ is the Riemannian metric on $TM$, $d\mu$, the Riemannian density and the inertia operator
\begin{equation*}
A : \Gamma(TM) \to \Gamma(TM)
\end{equation*}
is a $L^{2}$-symmetric, positive definite, continuous linear operator. In order to get an inner product on each tangent space $T_{\varphi}\DiffM$, we translate the above inner product using the right translations  $R_\varphi$
\begin{equation*}\label{eq:definition-metric}
G_{\varphi}(v_\varphi,w_\varphi)= \int_{M} g\left( A(R_{\varphi^{-1}} v_\varphi), R_{\varphi^{-1}} w_\varphi \right) d\mu \ ,
\end{equation*}
where $v_\varphi,w_\varphi\in T_{\varphi}\DiffM$, and  $A_{\varphi} := R_{\varphi}\circ A\circ R_{\varphi^{-1}}.$

 Let $u(t) := R_{\varphi^{-1}(t)}\dot\varphi(t)$ be the \emph{Eulerian velocity} of the geodesic curve $\varphi(t)$. Then $u(t)$ is a solution of the \emph{Euler-Poincar\'{e} equation \textit{$\mathrm{( EPDiff) }$}  on $\DiffM$}
\begin{equation}\label{eq:EPDiff}
m_{t} + \nabla_{u}m + \left(\nabla u\right)^{t}m + (\mathrm{div} u) m = 0, \quad m := Au \,,
\end{equation}
where $\left(\nabla u\right)^{t}$ is the Riemannian adjoint (with respect to the metric $g$) of $\nabla u$.

When $A$ is invertible, the EPDiff equation~\eqref{eq:EPDiff} can be recast as
\begin{equation}\label{eq:Diff-Euler-Arnold}
u_{t} = - A^{-1}\left\{ \nabla_{u}Au + \left(\nabla u\right)^{t}Au + (\mathrm{div} u) Au \right\},
\end{equation}
which is the \textit{Euler--Arnold} equation for $\DiffM$.

In the sequel we would like to introduce the spray related to the metric $G_\varphi$, but one has to find the best way to visualize it. We need some  terminology from classical mechanics, in order to give an interpretation to some abstract computations. The \textit{configuration space} of the fluid motion (the space of all physically valid states) is $\DiffM$  and every $\varphi\in \DiffM$ takes a  particle $p\in M$ to a particle $\varphi(p)$. Thus every $\varphi$ is a \textit{symmetry} of the mechanical system.
A \textit{motion} of the fluid is a curve $\varphi(t)\in\DiffM$. Obviously during a motion every particle $p$ describes a path $t\to\varphi(t,p)$ and we call $\varphi(t,p)$ the \textit{Eulerian points} of this path. The \textit{Lagrangian velocity field}  $\dot{\varphi}(t,p)$   is the time derivative of this trajectory.  The \textit{Eulerian velocity} is defined as $u(t,q)=\dot{\varphi}(t,\varphi^{-1}(t)(q))$ and is the velocity at time $t$ of the particle  currently in position $q.$   

The covariant derivative along the path $\varphi(t,p)$ is called the \textit{material derivative} and connects  the \textit{Eulerian derivative} $u_t$ with the Lagrangian and Eulerian velocity fields
\begin{equation}\label{materialderivative2}
\left(\nabla_{\raisebox{-2pt}{\tiny\textit t}}\dot{\varphi}(t,p)\right)(t_0)=(u_t+\nabla_uu)(\varphi(t_0,p))\ .\end{equation}

Locally on $TTM$ the second order tangent vector $\ddot \varphi(t,p)$ is expressed as

\begin{equation}\begin{split}
\left(\varphi(t,p),\dot \varphi(t,p), \dot \varphi(t,p), -\Gamma_{\varphi(t,p)}(\dot \varphi(t,p),\dot \varphi(t,p))+ \nabla_{\raisebox{-3pt}{\tiny \textit t}}\dot\varphi(t,p) \right)
\end{split}
\end{equation}
because of $\nabla_{\raisebox{-3pt}{\tiny \textit t}}\dot\varphi(t,p)=\ddot \varphi(t,p)+\Gamma_\varphi(\dot\varphi(t,p),\dot{\varphi}(t,p)).$ Here $\Gamma$ is the Christoffel symbol corresponding to the metric given on $M.$ 

We start now with the connection $\nabla$ of $M$ and we build a connection $\widetilde{\nabla}^q$ on $\DM{q}$, in the way done in Section 9 of \cite{EM1970}. Usually the restriction $q>\frac{3}{2}$ is taken into consideration. The projections $\pi_M, \pi_{TM}$ and  connector  $K$ ascend to smooth mappings $\widetilde{\pi}_{M}^q, \widetilde{\pi}_{TM}^q$ and $\widetilde K^q$, respectively. Here by $\widetilde K^q$ we mean $\widetilde K^q(X):=K\circ X$, when $ X\in TT\DM{q},$ with similar meaning  for $\widetilde \pi_M^q, \widetilde{\pi}_{TM}^q$.

By their very definition:
$$T\mathcal D^q(M)=\{\xi\in H^q(M,TM):\ \widetilde \pi^q_M\circ \xi\in\mathcal D^q(M)\},$$
$$TT\mathcal D^q(M)=\{X\in H^q(M,TTM):\ \widetilde \pi^q_{TM}\circ X\in T\mathcal D^q(M)\},$$
 thus first and second order vector fields on $\DM{q}$ can be seen as mappings on $M$.
Since the connecting morphisms of the inverse systems $\{\DM{q}\}_q$, $\{T\DM{q}\}_q$ and $\{TT\DM{q}\}_q$   are inclusion mappings it is  straightforward to argue the existence of the  inverse limits $\widetilde K=\lim\limits_{\leftarrow q} \widetilde K^q$, $\widetilde \pi _M=\lim\limits_{\leftarrow q} \widetilde \pi^q_M$, and  $\widetilde \pi _{TM}=\lim\limits_{\leftarrow q} \widetilde \pi^q_{TM}$.

We consider now $\varphi(t)\in\DiffM$ a motion of the fluid. The mapping $\widetilde K$ will induce an ILH connection $\widetilde \nabla$ on $\DiffM$, in the sense of \cite{Hideki2,Hideki}. Using $\widetilde \nabla$ we get a covariant derivative along the motion $\varphi(t)$ that satisfies
$$\left(\widetilde\nabla_{\raisebox{-2pt}{\tiny\textit t}}\dot{\varphi}\right)(t_0)(p)=\left(\nabla_{\raisebox{-2pt}{\tiny\textit t}}\dot{\varphi}(t,p)\right)(t_0).$$

Because of \eqref{materialderivative2} one can recast the Euler-Arnold equation \eqref{eq:Diff-Euler-Arnold}   in the form

\begin{equation}\label{materialderivative}
\widetilde\nabla_{\raisebox{-2pt}{\tiny\textit t}}\dot\varphi= S_\varphi(\dot{\varphi})= R_{\varphi} \circ S\circ
R_{\varphi^{-1}} (\dot{\varphi}) ,
\end{equation}
where
\begin{equation*}\label{eq:spray}
S(u) = A^{-1} \left\{ [A,\nabla_{u}] u - (\nabla u )^{t} Au - (\mathrm{div} u) Au \right\}.
\end{equation*}

 Further, we will relate  the above Lagrangian description \eqref{materialderivative} of the EPDiff equation  to the spray equation. First of all, we need an image  of the spray equation. Since the local charts on $\DiffM$ use the Riemannian exponential of $M$, see  \cite{Hideki}, it will  not be helpful to switch from the Eulerian description \eqref{eq:Diff-Euler-Arnold} of the spray equation to a local description on $TT\DiffM$.

The double tangent bundle $TT\DM{q}$ has also a fiber bundle structure over $\DM{q}$ which is identified with $T\DM{q}\oplus T\DM{q}\oplus T\DM{q}$, via Dombrowski's isomorphism \cite{Lang99}. In order to get an appropriate image of the spray equation, let us consider the fiber subbundle of $2$-velocities, see also \cite{Modin}
\begin{equation*}
T^2\DM{q} = \set{\xi\in TT\DM{q} : \ \widetilde \pi^q_{TM}(\xi)=T\widetilde\pi^q_M(\xi)}.
\end{equation*}

The advantage of $T^2\DM{q}$ consists in its vector bundle structure over $\DM{q}$, apart from its canonical identification with $T\DM{q}\oplus T\DM{q}$. Moreover, for $\widetilde \nabla$ exists, one can construct the inverse limit of vector bundles $$T^2\DiffM=\lim\limits_{\leftarrow q}T^2\DM{q}$$ as in \cite{Dodson}, for example. The identification of $T^2\DiffM$ with the Whitney sum $T\DiffM\oplus T\DiffM $ is via
$$(\varphi,\dot\varphi, \ddot\varphi)\to (\dot\varphi, \widetilde\nabla_{\dot\varphi }\dot\varphi).$$

The equation \eqref{materialderivative} corresponds, under the above identification,  to the spray equation.
 Thus, it makes sense to call the mapping
$$v_\varphi \mapsto (v_\varphi,  S_{\varphi} (v_\varphi)),\ \ T\DiffM\mapsto   T\DiffM\oplus T\DiffM $$  the \textit{geodesic spray} related to the right-invariant metric \eqref{eq:definition-metric}. The idea is to prove the existence of geodesics $\varphi^q$ on  Banach approximations $\DM{q}$ and then to obtain geodesics on $\DiffM$ as an inverse limit $\varphi=\lim\limits_{\leftarrow q}\varphi^q$. 
 The big stake is the well-posedness, in the smooth category, of the EPDiff equation on a compact manifold, with an inertia operator $A$ of a pseudo-differential type, an extension of the results presented in \cite{Cismas2016,EKHS}.

In the end, in order to help the reader to follow some of the arguments presented in this paper, we will list in an Appendix some basic facts from  convenient calculus.

\section*{Acknowledgments} I would like to express my very great appreciation to Cornelia Vizman and Boris Kolev for their valuable and constructive suggestions during the planning and development of this research work. The research was partially supported  by CNCS UEFISCDI, project number PN-III-P4-ID-PCE-2016-0778 and partially by Horizon2020-2017-RISE-777911.

\appendix

\section{A glimpse into the convenient calculus}  We add  some details about the convenient calculus of A. Fr\"olicher and A. Kriegl.
All the missing proofs of the statements presented below can be found in \cite{Michor}.

In locally convex spaces there is a weaker notion than that of Cauchy sequences,  namely the  \textit{Mackey-Cauchy sequences}.
\begin{defn} A sequence $(x_n)_n$ in $\E$ is called Mackey-Cauchy if there exists a bounded and absolutely convex set $B$ and for every $\varepsilon> 0$  an integer $n_{\varepsilon}\in\mathbb{N}$ such that
	$$x_n-x_m\in \varepsilon B, \hspace{0,3cm} \forall n>m>n_{\varepsilon}.$$
\end{defn}
This is equivalent with $t_{nm}(x_n-x_m)\rightarrow 0$ for some $t_{nm}\rightarrow  \infty $ in $\mathbb{R}.$ 
\begin{defn} A \textit{convenient vector space} is a locally convex topological vector space which is Mackey complete (every Mackey-Cauchy sequence converges in $\E$).
\end{defn}
Any sequentially complete topological vector space is Mackey-complete.

\begin{defn}(Bornology)  Let $X$ be a set, a \textit{bornology} on $X$ is a collection $\mathcal{B}$ of subsets such that:
	\begin{itemize}
		\item[(i)] $\mathcal{B}$ covers $X,$ i.e.  $X=\bigcup\limits_{B\in \mathcal{B}}B.$
		\item[(ii)] $\mathcal{B}$ is stable under inclusions, if $B\in\mathcal{B}$ and $B_0\subseteq B,$ then $B_0\in \mathcal{B}.$
		\item[(ii)]  $\mathcal{B}$ is stable under finite unions, if $B1,\ldots, B_n\in\mathcal{B},$ then $\bigcup\limits_{i=1}^n B_i\in \mathcal{B}.$
	\end{itemize}
\end{defn}

Given a locally convex space $(\E, \tau)$  we obtain a natural bornology on $\E$ (von Neumann bornology)  consisting of all bounded sets. We call a set $U\subseteq \E$ \textit{bornivorous} if it absorbs every bounded set from its von Neumann bornology.
\begin{defn} A Hausdorff locally convex space $\E$ is called \textit{bornological} if each convex, balanced and bornivorous set in $\E$  is a neighborhood of $0.$
\end{defn}

\begin{defn} Let $ (\E, \tau)$ be a locally convex topological vector space, then the collection of all absolutely convex bornivorous subsets forms a locally convex topology $\tau_{born}$ called the \textit{bornologification} of the initial topology. The space $\E_{born}:=(\E, \tau_{born})$ is called the \textit{attached bornological space} and it is the finest locally convex structure having the same bounded sets as $(\E,\tau).$
\end{defn}

\begin{rem} A subset $U$ in $\E$ is \textit{$c^\infty$-open} iff for every $x\in U$ there is a bornivorous set $B$ such that $x+B\subset U$. Instead, a subset $U$ is open relative to the bornologification of the initial topology iff for every $x\in U$ there exists a convex, balanced and bornivorous set $B$ such that $x+B\subset U.$
	
	A convenient vector space $\E$ interacts naturally with the $c^\infty$-topology: a convenient smooth map between convenient vector spaces will be continuous relative to this topology and usually is not continuous relative to the initial topology on $\E$. But, in general, the $c^\infty$-topology is not a liniar topology\hspace{0,1cm}!  
For linear mappings the above phenomenon is not a problem: a linear mapping between convenient vector spaces is bounded iff it is convenient smooth.
 Besides the cartesian closedness, another cornerstone of the convenient calculus is the fact that the two fundamental spaces $\mathrm{C}^{\infty}(\E,\mathbb{F})$ and $\mathrm{L}(\mathbb{E},\mathbb{F})$ (linear and bounded) will remain in this category, when $\mathbb{E},\mathbb{F}$ are convenient vectors spaces. 
The  smoothness of a curve $c:\RR  \rightarrow\E$ does not depend on the initial topology on $\E,$ it depends only on its bornology.  Usually we substitute the initial locally convex topology with its bornologification and work with bornological locally convex spaces. 
In this way we can exploit the characteristic  property: on bornological spaces a linear mapping is continuous if and only if  it is bounded.
\end{rem}

We equip $\Cinf(\RR,\F)$ with the bornologification of the topology of uniform convergence on compact sets, in all derivatives separately. The space $\Cinf(\E,\F)$ will be equiped with the bornologification of the initial topology relative to all pullback mappings $c^* : \Cinf(\E,\F)\rightarrow \Cinf(\RR,\F),$
$c^*(f):=f \circ c,$ for all $c\in \Cinf(\RR,\E).$ If a locally convex space $\E$ is Mackey-complete (convenient) then  its attached bornological space¸ $\E_{born}$, having the same bounded sets, will be Mackey-complete.

\begin{prop} For locally convex spaces $\E,\F$ we have:
	\begin{itemize}
		\item[(i)] If   $\F$ is a convenient vector space then $\Cinf(\E,\F)$ is a convenient vector space, for any $\E.$ The space $\mathrm{L}(\E,\F)$ is a closed linear subspace and it is a convenient vector space endowed with the initial topology relative to the inclusion mapping. 
		
		\item[(ii)]  If  $\E$ is a convenient vector space then a curve $c:\RR\rightarrow \mathrm{L}(\E,\F)$ is smooth if and only if $t\mapsto c(t)(x)$ is a smooth curve in $\F,$  for all $x\in\E.$ 
	\end{itemize}
\end{prop}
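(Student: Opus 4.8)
The plan is to obtain part (i) by realizing $\Cinf(\E,\F)$ as a closed linear subspace of a product of copies of $\Cinf(\RR,\F)$, and then to read off $\mathrm{L}(\E,\F)$ as a further closed subspace. First I would recall (from \cite{Michor}) that $\Cinf(\RR,\F)$ is a convenient vector space whenever $\F$ is: a Mackey--Cauchy sequence in $\Cinf(\RR,\F)$ produces, for each derivative order and each compact interval, a Mackey--Cauchy sequence in $\F$ with a uniform bound, so by Mackey completeness of $\F$ the limits exist and assemble into a smooth curve; since the bornologification preserves Mackey completeness (stated in the Appendix), the displayed topology on $\Cinf(\RR,\F)$ does too. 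Next, the canonical map $\iota\colon\Cinf(\E,\F)\to\prod_{c\in\Cinf(\RR,\E)}\Cinf(\RR,\F)$, $f\mapsto(f\circ c)_c$, is linear, injective (constant curves recover the values of $f$), and --- before bornologification --- a topological embedding, because the topology on $\Cinf(\E,\F)$ is by definition the initial one with respect to the pullbacks $c^*$. As a product of convenient vector spaces is convenient and a closed linear subspace of a convenient vector space is convenient, it suffices to check that $\range\iota$ is closed; then $\Cinf(\E,\F)$ with the initial topology is convenient, and passing to the bornologification keeps it so.

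For the closedness I would show that $\range\iota$ is exactly the set of families $(f_c)_c$ satisfying the compatibility conditions $f_{c_1}(s)=f_{c_2}(t)$ whenever $c_1(s)=c_2(t)$. The inclusion ``$\subseteq$'' is clear; conversely, given such a compatible family, set $f(x):=f_{\mathrm{const}_x}(0)$, and comparing an arbitrary curve $c$ with the constant curve at $c(t)$ yields $f_c=f\circ c$ for every $c$, hence $f\circ c$ is smooth for all $c$, i.e. $f\in\Cinf(\E,\F)$ and $\iota(f)=(f_c)_c$. Each compatibility condition cuts out a closed set, being the vanishing locus of $(f_c)_c\mapsto\mathrm{ev}_s(f_{c_1})-\mathrm{ev}_t(f_{c_2})$, since point evaluations $\Cinf(\RR,\F)\to\F$ are continuous and $\F$ is Hausdorff; an intersection of closed sets is closed. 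Finally, using that for linear maps convenient smoothness coincides with boundedness (Appendix), $\mathrm{L}(\E,\F)=\{f\in\Cinf(\E,\F):f\text{ is linear}\}$; linearity is again a closed condition (kernels of the bounded maps $f\mapsto f(x+y)-f(x)-f(y)$ and $f\mapsto f(\lambda x)-\lambda f(x)$, using that point evaluations on $\Cinf(\E,\F)$ are bounded), so $\mathrm{L}(\E,\F)$ is a closed linear subspace of $\Cinf(\E,\F)$, hence convenient with the topology induced by the inclusion.

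For part (ii) the direction ``$\Rightarrow$'' is immediate: $\mathrm{ev}_x\colon\mathrm{L}(\E,\F)\to\F$ is linear and bounded, hence convenient smooth, and it carries the smooth curve $c$ to $t\mapsto c(t)(x)$. The converse is the substantial point, and I would deduce it from the linear uniform boundedness principle for the convenient space $\E$: a subset $M\subseteq\mathrm{L}(\E,\F)$ is bounded as soon as $\{\ell(x):\ell\in M\}$ is bounded in $\F$ for every $x\in\E$ --- equivalently, the bornology of $\mathrm{L}(\E,\F)$ is generated by the point evaluations. Granting this, given a curve $c$ with every $c_x:=\mathrm{ev}_x\circ c$ smooth, one argues inductively: the difference quotients $\tfrac1h\bigl(c(t+h)-c(t)\bigr)$, as well as $\tfrac1h$ times their deviation from the candidate derivative $c'(t)\colon x\mapsto c_x'(t)$, are pointwise bounded in $x$ (because each $c_x$ is smooth), hence bounded in $\mathrm{L}(\E,\F)$ by the principle; this shows $c'(t)\in\mathrm{L}(\E,\F)$ and that $c$ is differentiable with Mackey-convergent difference quotients, and since $c'$ again has all $\mathrm{ev}_x\circ c'=c_x'$ smooth, the argument iterates, giving $c\in\Cinf(\RR,\mathrm{L}(\E,\F))$. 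Alternatively one may promote $c$ to a map $\check c\colon\RR\times\E\to\F$ shown to be smooth via the same principle and invoke cartesian closedness together with part (i).

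The hard part is the uniform boundedness principle itself: upgrading ``pointwise bounded'' to ``bounded'' for families in $\mathrm{L}(\E,\F)$. This is where Mackey completeness of $\E$ enters in an essential, non-formal way, through a Baire-category / gliding-hump construction --- if a pointwise bounded family $M$ were unbounded on some bounded $B\subseteq\E$, one builds from witnesses $x_n\in B$ and $\ell_n\in M$ a Mackey-convergent series $\sum\mu_n x_n$ in $\E$ on which the $\ell_n$ are forced to be unbounded, contradicting pointwise boundedness. I would carry this out following \cite{Michor}, where the principle and its consequences for $\mathrm{L}(\E,\F)$ and for $\Cinf$-mapping spaces are established in full detail.
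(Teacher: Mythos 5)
The paper offers no proof of this proposition — it sits in the Appendix, all of whose proofs are explicitly deferred to \cite{Michor} — and your argument is essentially the Michor--Kriegl proof that this citation points to: part (i) by exhibiting $\Cinf(\E,\F)$ as a closed linear subspace of a product of copies of $\Cinf(\RR,\F)$ (Michor--Kriegl phrase this as a projective limit over reparametrizations $f_{c\circ g}=g^*f_c$, which is equivalent to your pointwise compatibility conditions), and part (ii) via the uniform boundedness principle for point evaluations on $\mathrm{L}(\E,\F)$, whose gliding-hump proof you correctly isolate as the one place where Mackey completeness of $\E$ enters non-formally. The only soft spot is the last step of (i): a merely \emph{bounded} linear map need not have topologically closed kernel, so to conclude that $\mathrm{L}(\E,\F)$ is closed (and not just $c^\infty$-closed, which would already give Mackey completeness) you should observe that the point evaluations on $\Cinf(\E,\F)$ are in fact \emph{continuous}, being composites of the pullback along a constant curve with $\mathrm{ev}_0$ on $\Cinf(\RR,\F)$, and that continuity survives passing to the finer bornologified topology.
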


\begin{prop}\label{derivata} Let $\mathbb{E}, \mathbb{F}, \mathbb{G}$ be convenient vector spaces, $U\subset \mathbb{E},$ $V\subset \mathbb{F}$  $c^{\infty}$-open subsets:
	\begin{itemize}
		\item[(i)] If  $f:U\rightarrow \mathbb{F}$ is convenient smooth, then the mapping $df: U\rightarrow\mathrm{L}(  \mathbb{E}, \mathbb{F})$ is convenient smooth and  linear bounded in the second component, where:
		$$d_xf(h):=\frac{d}{dt}\bigg|_{t=0} f(x+th) .$$
		
		\item[(ii)] The differentiation operator $d:\mathrm{C}^{\infty}(U,\mathbb{F})\rightarrow\mathrm{C}^{\infty}(U,\mathrm{L}(\mathbb{E},\mathbb{F}))$ exists, is linear and bounded (smooth) and the chain rule holds:
		$$d_x(f\circ g)(v)=d_{g(x)}f( d_xg(v)).$$
		
		\item[(iii)] Convenient smooth mappings are continuous with respect to the $c^{\infty}$-topology.

		\item[(iv)] Multilinear mappings are convenient smooth if and only if they are bounded and for the derivative we have the product rule:
		$$d_{(x_1,\ldots x_n)}f(v_1,\ldots,v_n)=\sum_{i=1}^nf(x_1,\ldots, x_{i-1},v_i,x_{i+1},\ldots, x_n).$$

		\item[(v)] (Smooth uniform boundedness) A linear mapping $f:\mathbb{E}\rightarrow \mathrm{C}^{\infty}(V,\mathbb{G})$ is convenient smooth (bounded) if and only if $ev_v\circ f : \mathbb{E}\rightarrow\mathbb{G}$ is convenient smooth, for each $v\in V\subset\mathbb{F},$ where $\mathrm{ev}_v:  \mathrm{C}^{\infty}(V,\mathbb{G})\rightarrow\mathbb{G}$ denotes the evaluation mapping.
		
		\item[(vi)] (Smooth detection principle) A mapping $f:U\subset\mathbb{E}\rightarrow \mathrm{L}(  \mathbb{F}, \mathbb{G})$ is convenient smooth if and only if $\mathrm{ev}_y\circ f :U \rightarrow\mathbb{G}$ is convenient smooth for all $y\in\mathbb{F}.$
		
		\item[(vii)] A mapping $ f:U\rightarrow \mathrm{L}(  \mathbb{F}, \mathbb{G})$ is convenient smooth if and only if the mapping $ f:U\rightarrow \mathrm{C}^{\infty}(  \mathbb{F}, \mathbb{G})$ is convenient smooth, i.e. $ \mathrm{L}(  \mathbb{F}, \mathbb{G})\hookrightarrow \mathrm{C}^{\infty}(  \mathbb{F}, \mathbb{G})$ is initial.

	\end{itemize}
\end{prop}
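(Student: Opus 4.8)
The plan is to derive items (i)--(vii) as a cascade from the two pillars of the Fr\"olicher--Kriegl theory that are already in force, namely that convenient smoothness is detected by composition with smooth curves $c\in\Cinf(\RR,\E)$ and cartesian closedness $\Cinf(\RR^2,\F)\cong\Cinf(\RR,\Cinf(\RR,\F))$. I would treat the items in the order (iii), (i), (ii), (iv), and finally the uniform--boundedness results (v)--(vii). Item (iii) is purely topological and I would dispatch it first: if $f$ is convenient smooth and $W\subseteq\F$ is $\cinf$-open, then for every $c\in\Cinf(\RR,\E)$ the curve $f\circ c$ is smooth, hence $\cinf$-continuous, so $c^{-1}(f^{-1}(W))=(f\circ c)^{-1}(W)$ is open in $\RR$; by the very definition of the $\cinf$-topology this says $f^{-1}(W)$ is $\cinf$-open, i.e. $f$ is $\cinf$-continuous.

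For (i) the engine is the difference-quotient lemma: for a smooth curve $c\in\Cinf(\RR,\E)$ the map $(t,s)\mapsto(c(t)-c(s))/(t-s)$, with value $\dot c(t)$ on the diagonal, is again convenient smooth into $\E$. Feeding the affine curve $t\mapsto x+th$ into $f$ shows the directional derivative $d_xf(h)=\frac{d}{dt}\big|_{t=0}f(x+th)$ exists; homogeneity in $h$ is immediate, while additivity is obtained by testing $f$ on a two-parameter smooth map and invoking cartesian closedness. Joint smoothness of $(x,h)\mapsto d_xf(h)$ on $U\times\E$ follows from the curve lemma applied to smooth curves in $U\times\E$, and since this map is linear and bounded in $h$ it transposes, again by cartesian closedness, to a convenient smooth $df:U\to\mathrm{L}(\E,\F)$.

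Item (ii) is then formal: linearity of the operator $d$ is clear, its boundedness is (i) read on the level of the mapping spaces, and the chain rule is verified by inserting a curve $c$ with $c(0)=x$, $\dot c(0)=v$ and differentiating $f\circ g\circ c$ at $0$. For (iv) I would prove both implications by curve testing. A convenient smooth map is automatically bounded on bounded sets, and for a multilinear map boundedness on bounded sets is equivalent to ordinary multilinear boundedness, giving the forward direction; conversely a bounded multilinear map sends smooth curves to smooth curves, since plugging a smooth curve into each slot and using cartesian closedness exhibits the value as a finite iterate of smooth scalar operations, and the product rule drops out by differentiating along the diagonal.

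The genuine obstacle is (v), the smooth uniform boundedness principle, from which (vi) and (vii) follow. Its nontrivial direction asserts that pointwise smoothness of $\mathrm{ev}_v\circ f$ for all $v\in V$ already forces $f:\E\to\Cinf(V,\G)$ to be convenient smooth; this is the deep theorem of the theory, resting on the Mackey completeness of the target, on the special-curve lemma that every Mackey-convergent sequence lies on a single smooth curve, and on the fact that $\Cinf(V,\G)$ carries the initial structure with respect to all point evaluations. Granting (v), the detection principle (vi) is its specialization to maps $f:U\to\mathrm{L}(\F,\G)$, while (vii) is immediate from the earlier appendix proposition: since $\mathrm{L}(\F,\G)$ is by construction the closed subspace of $\Cinf(\F,\G)$ endowed with the initial topology of the inclusion, a map into $\mathrm{L}(\F,\G)$ is convenient smooth exactly when it is convenient smooth into $\Cinf(\F,\G)$. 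I would therefore isolate (v) as the single point where the full force of Mackey completeness and the curve lemmas is indispensable, all other items being either definitional or straightforward consequences of (i), (iv) and cartesian closedness.
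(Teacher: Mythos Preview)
The paper does not actually supply a proof of this proposition: the appendix opens with the blanket remark that ``all the missing proofs of the statements presented below can be found in \cite{Michor}'', and Proposition~\ref{derivata} is one of those statements, listed without argument. So there is no proof in the paper to compare against; what you have written is not a reconstruction of the paper's argument but an independent sketch of the standard Fr\"olicher--Kriegl development.

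That said, your roadmap is faithful to how the cited reference \cite{Michor} actually proceeds (items 3.12, 3.13, 3.18, 5.5, 5.18, 5.26 there), and the ordering you choose --- (iii) first as purely topological, then (i) via the difference-quotient lemma and cartesian closedness, then the formal consequences (ii) and (iv), and finally isolating (v) as the genuine uniform-boundedness theorem from which (vi) and (vii) descend --- is exactly the right dependency graph. Two small cautions: in your treatment of (iv), the assertion that ``a convenient smooth map is automatically bounded on bounded sets'' is not literally true in general (convenient smooth maps are only guaranteed to be bounded on bornologically compact sets), so the forward implication for multilinear maps needs the specific argument that smoothness along lines through the origin forces separate boundedness, and then an induction on the number of variables; and in (vii) you appeal to ``the earlier appendix proposition'' for the initiality of $\mathrm{L}(\F,\G)\hookrightarrow\Cinf(\F,\G)$, which is fine, but be aware that in \cite{Michor} this initiality is itself derived from the uniform boundedness principle, so (vii) is really a corollary of (v) rather than an independent input.
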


\begin{prop}\label{cartesian closedness}(Cartesian closedness) Let $U_i\subseteq \E_i,$ $i=\overline{1,2},$ be two $\cinf$-open subsets in locally convex spaces which need not to be convenient. 
	Then a mapping $f: U_1\times U_2\rightarrow \F$ is convenient smooth if and only if the cannonically associated mapping $f^{\vee} : U_1\rightarrow \mathrm{C}^{\infty}(U_2, \F)$ exists and is convenient smooth:
	$$\mathrm{C}^{\infty}(U_1\times U_2, \F) =\mathrm{C}^{\infty}(U_1, \mathrm{C}^{\infty}(U_2, \F)).$$
\end{prop}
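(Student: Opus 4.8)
The plan is to establish the exponential law by first settling the one–dimensional base case and then bootstrapping to arbitrary $\cinf$-open subsets by testing with smooth curves. Recall that $\mathrm{C}^{\infty}(U_2,\F)$ carries the initial structure with respect to the pullback maps $c^{*}:\mathrm{C}^{\infty}(U_2,\F)\to\mathrm{C}^{\infty}(\RR,\F)$, $h\mapsto h\circ c$, over all smooth curves $c:\RR\to U_2$, and that $\mathrm{C}^{\infty}(\RR,\F)$ carries (the bornologification of) the topology of uniform convergence on compacta in every derivative separately; in particular a difference quotient converges there precisely when it converges locally uniformly in each $r$-derivative. Since the notion of smooth curve into $\F$ — hence of convenient smooth map into $\F$ — is unchanged if $\F$ is replaced by its Mackey completion, and the bijection $f\leftrightarrow f^{\vee}$ is compatible with this replacement, we may assume $\F$ is convenient.

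The heart of the argument is the base case: a map $f:\RR^{2}\to\F$ is convenient smooth if and only if $f^{\vee}:\RR\to\mathrm{C}^{\infty}(\RR,\F)$ exists and is convenient smooth. For the forward direction one fixes $s$, observes $f(s,\cdot)\in\mathrm{C}^{\infty}(\RR,\F)$, and shows that the difference quotient $\tfrac1h\bigl(f^{\vee}(s+h)-f^{\vee}(s)\bigr)$, together with each of its $r$-derivatives $\partial_r^{j}$, converges locally uniformly as $h\to0$ to $\partial_1\partial_r^{j}f(s,\cdot)$; this follows from Taylor's formula in the first variable applied to $\partial_r^{j}f$, whose integral remainder is controlled on compacta because $\partial_1^{2}\partial_r^{j}f$ is continuous and $\F$ is convenient (so the remainder integral is defined). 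Hence $f^{\vee}$ is differentiable as a curve with $(f^{\vee})'=(\partial_1f)^{\vee}$; since $\partial_1f$ is again smooth on $\RR^{2}$, iterating shows $f^{\vee}$ has $n$-th derivative $(\partial_1^{n}f)^{\vee}$ for all $n$, hence is smooth. Conversely, if $\tilde h:=f^{\vee}:\RR\to\mathrm{C}^{\infty}(\RR,\F)$ is a smooth curve then $f=(\tilde h)^{\wedge}$ has partial derivatives $\partial_1 f=(\tilde h')^{\wedge}$ and $\partial_2 f=\bigl(D\circ\tilde h\bigr)^{\wedge}$, where $D$ is the (bounded, hence smooth) differentiation operator on $\mathrm{C}^{\infty}(\RR,\F)$; inducting on the total order of differentiation and deducing joint continuity from the local uniformity of these convergences yields $f\in\mathrm{C}^{\infty}(\RR^{2},\F)$. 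I expect this uniform control of all mixed difference quotients to be the main obstacle; the rest is formal.

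With the base case in hand I would deduce the general statement. For the forward implication, let $f:U_1\times U_2\to\F$ be convenient smooth. For each $x\in U_1$ the curve $t\mapsto(x,c(t))$ is smooth in $U_1\times U_2$ for every smooth $c:\RR\to U_2$, so $f(x,\cdot)$ sends smooth curves to smooth curves and $f^{\vee}(x):=f(x,\cdot)\in\mathrm{C}^{\infty}(U_2,\F)$; thus $f^{\vee}$ is well defined. To see $f^{\vee}$ is convenient smooth it suffices, by the initial structure of $\mathrm{C}^{\infty}(U_2,\F)$, that for every smooth $\gamma:\RR\to U_1$ and every smooth $c:\RR\to U_2$ the curve $s\mapsto c^{*}\bigl(f^{\vee}(\gamma(s))\bigr)$ be smooth in $\mathrm{C}^{\infty}(\RR,\F)$; but this curve equals $\bigl(f\circ(\gamma\times c)\bigr)^{\vee}$, and $f\circ(\gamma\times c):\RR^{2}\to\F$ is convenient smooth as a composite, so the base case applies. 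For the converse, let $f^{\vee}:U_1\to\mathrm{C}^{\infty}(U_2,\F)$ exist and be convenient smooth, and take any smooth curve into $U_1\times U_2$; it has the form $(\gamma,c)$ with $\gamma,c$ smooth into $U_1,U_2$ (a curve into a product is smooth iff both components are). Then $c^{*}\circ f^{\vee}\circ\gamma:\RR\to\mathrm{C}^{\infty}(\RR,\F)$ is smooth by the initial structure, so by the base case $(s,r)\mapsto f^{\vee}(\gamma(s))(c(r))$ is convenient smooth on $\RR^{2}$; precomposing with the smooth diagonal $s\mapsto(s,s)$ shows $s\mapsto f(\gamma(s),c(s))$ is smooth, so $f$ is convenient smooth.

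Finally, the displayed identity is exactly the bijection $f\mapsto f^{\vee}$ just shown to restrict to a bijection between the two spaces of convenient smooth maps; that it is moreover an isomorphism of convenient vector spaces — i.e. identifies the two structures — follows routinely from the universal properties of the initial structures involved, matching smooth curves on either side by the same computations, and I would only indicate this rather than carry it out.
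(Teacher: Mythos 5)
The paper itself offers no proof of this proposition: it appears in the Appendix, whose preamble explicitly defers all missing proofs to \cite{Michor}. Your argument is, in structure, exactly the proof given there (Theorem 3.12 of Kriegl--Michor reduced to the one-dimensional exponential law, their Theorem 3.2): establish $\mathrm{C}^{\infty}(\RR^2,\F)\cong \mathrm{C}^{\infty}(\RR,\mathrm{C}^{\infty}(\RR,\F))$ by hand, then bootstrap using the facts that a smooth curve into $U_1\times U_2$ splits into a pair of smooth curves and that $\mathrm{C}^{\infty}(U_2,\F)$ carries the initial structure with respect to the pullbacks $c^*$ along smooth curves $c:\RR\to U_2$. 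The bootstrapping paragraphs are correct and complete at the intended level of detail, granted the standard structural facts about $\mathrm{C}^{\infty}(\RR,\F)$ (bornological invariance of curve-smoothness, so that the bornologification of the compact-open $\mathrm{C}^\infty$-topology is harmless, and the vector-valued Boman-type characterization of smoothness on $\RR^2$ used in the converse of your base case).

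The one step that does not hold up as written is the opening reduction ``we may assume $\F$ is convenient'' via the Mackey completion. Smoothness of a curve $c:\RR\to\F$ is \emph{not} unchanged under replacing $\F$ by its Mackey completion $\tilde\F$: a curve with values in $\F$ can be smooth into $\tilde\F$ while its derivatives, being limits of difference quotients, land in $\tilde\F\setminus\F$ (this only works for $\cinf$-closed subspaces, and $\F$ is Mackey-dense, not closed, in $\tilde\F$); the same problem then propagates to $\mathrm{C}^{\infty}(U_2,\F)$ versus $\mathrm{C}^{\infty}(U_2,\tilde\F)$. Since you invoke convenience of $\F$ only to define the integral remainder in Taylor's formula, the cleanest repair is to drop the reduction and replace the integral remainder by the mean value estimate of convenient calculus: the difference quotient minus the candidate derivative $\partial_1\partial_r^{j}f(s,\cdot)$ lies in $\tfrac{h}{2}$ times the closed convex hull of the values of $\partial_1^{2}\partial_r^{j}f$ on a compact set, which gives the required locally uniform convergence for every continuous seminorm without any completeness hypothesis; note that no limits need to \emph{exist} by completeness here, since the candidates $(\partial_1^{n}f)^{\vee}$ are already exhibited as elements of $\mathrm{C}^{\infty}(\RR,\F)$. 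Alternatively, simply state the proposition for convenient $\F$, which covers every use made of it in this paper.
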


As a consequence of the cartesian closedness property let us note that the evaluation mapping
$$\mathrm{ev}: \mathrm{C}^{\infty}(U,\mathbb{F})\times U\rightarrow \mathbb{F},  \hspace{0,2cm}\mathrm{ev}(f,x):=f(x),$$
and the composition mapping
$$\circ :  \mathrm{C}^{\infty}(\mathbb{F},\mathbb{G}) \times  \mathrm{C}^{\infty}(U,\mathbb{F})\rightarrow  \mathrm{C}^{\infty}(U,\mathbb{G}) $$
are convenient smooth.

\begin{prop}\label{product rule} Let $f:\mathbb{E}\rightarrow\mathbb{F}$ and $ A:\mathbb{E}\rightarrow \mathrm{L}(\mathbb{F},\mathbb{G})$ be  convenient smooth mappings, then 
	$$d_x(A(\cdot)f(\cdot))v=d_xA(v)(f(x))+A(x)(d_xf(v)),$$
	for all $x,v\in \mathbb{E}.$
\end{prop}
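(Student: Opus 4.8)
The plan is to realize the pointwise product $x\mapsto A(x)(f(x))$ as a composition of convenient smooth mappings and then apply the chain rule together with the derivative formula for bounded bilinear maps.

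First I would introduce the evaluation mapping $\mathrm{ev}:\mathrm{L}(\mathbb{F},\mathbb{G})\times\mathbb{F}\to\mathbb{G}$, $\mathrm{ev}(B,y):=B(y)$. It is bilinear, and it is convenient smooth: indeed $\mathrm{L}(\mathbb{F},\mathbb{G})\hookrightarrow\mathrm{C}^{\infty}(\mathbb{F},\mathbb{G})$ is initial (Proposition \ref{derivata}(vii)) and the evaluation $\mathrm{C}^{\infty}(\mathbb{F},\mathbb{G})\times\mathbb{F}\to\mathbb{G}$ is convenient smooth by cartesian closedness (see the Appendix following Proposition \ref{cartesian closedness}); hence its restriction $\mathrm{ev}$ to $\mathrm{L}(\mathbb{F},\mathbb{G})\times\mathbb{F}$ is convenient smooth, and being bilinear it is bounded by Proposition \ref{derivata}(iv). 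Next, since $\mathrm{L}(\mathbb{F},\mathbb{G})$ is a convenient vector space and a mapping into a product of convenient vector spaces is convenient smooth if and only if each component is, the map $(A,f):\mathbb{E}\to\mathrm{L}(\mathbb{F},\mathbb{G})\times\mathbb{F}$, $x\mapsto(A(x),f(x))$, is convenient smooth, with $d_x(A,f)(v)=(d_xA(v),d_xf(v))$.

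Then I would observe that $A(\cdot)f(\cdot)=\mathrm{ev}\circ(A,f)$ and apply the chain rule of Proposition \ref{derivata}(ii):
$$d_x\big(A(\cdot)f(\cdot)\big)(v)=d_{(A(x),f(x))}\mathrm{ev}\big(d_xA(v),d_xf(v)\big).$$
Finally, applying the product rule for bilinear mappings from Proposition \ref{derivata}(iv) to $\mathrm{ev}$, namely $d_{(B,y)}\mathrm{ev}(\beta,\eta)=\mathrm{ev}(\beta,y)+\mathrm{ev}(B,\eta)=\beta(y)+B(\eta)$, with $B=A(x)$, $y=f(x)$, $\beta=d_xA(v)$ and $\eta=d_xf(v)$, yields
$$d_x\big(A(\cdot)f(\cdot)\big)(v)=d_xA(v)(f(x))+A(x)(d_xf(v)),$$
which is the claimed identity.

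There is no serious obstacle here; the only points requiring a little care are the convenient smoothness, equivalently boundedness, of the restricted evaluation map on $\mathrm{L}(\mathbb{F},\mathbb{G})\times\mathbb{F}$, and the fact that the chain rule of Proposition \ref{derivata} indeed applies because all the spaces involved — in particular $\mathrm{L}(\mathbb{F},\mathbb{G})$ and its product with $\mathbb{F}$ — are convenient vector spaces.
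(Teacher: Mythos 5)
Your proof is correct and follows essentially the same route as the paper: both realize $A(\cdot)f(\cdot)$ as $\mathrm{ev}\circ(A,f)$ with $\mathrm{ev}:\mathrm{L}(\mathbb{F},\mathbb{G})\times\mathbb{F}\to\mathbb{G}$, justify the convenient smoothness of this restricted evaluation via the initiality of $\mathrm{L}(\mathbb{F},\mathbb{G})\hookrightarrow\mathrm{C}^{\infty}(\mathbb{F},\mathbb{G})$ and cartesian closedness, and then combine the chain rule with the derivative formula for bounded bilinear mappings. Your citation of Proposition \ref{derivata}(iv) for the final step is in fact the more accurate reference (the paper's text cites (iii), which appears to be a slip).
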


\begin{proof} The evaluation mapping $\mathrm{ev}:\mathrm{C}^{\infty}(\mathbb{F},\mathbb{G})\times \mathbb{F}\rightarrow \mathbb{G}$ is convenient smooth and the curve $c:\mathbb{R}\rightarrow\mathrm{L}(\mathbb{F},\mathbb{G})$ is smooth iff $c:\mathbb{R}\rightarrow\mathrm{C}^{\infty}(\mathbb{F},\mathbb{G})$ is smooth by Proposition \ref{derivata} (vii). Thus $\mathrm{ev}:\mathrm{L}(\mathbb{F},\mathbb{G})\times \mathbb{F}\rightarrow \mathbb{G}$ is convenient smooth and bilinear. Hence
	$$d_x(A(\cdot)f(\cdot))v=d_x(\mathrm{ev}(A(\cdot),f(\cdot)))(v)=d_{(A(x),f(x))}\mathrm{ev}(d_xA(v), d_xf(v))$$
	$$=\mathrm{ev}_{d_xf(v)}A(x)+\mathrm{ev}_{f(x)}d_xA(v)=d_xA(v)(f(x))+A(x)(d_xf(v)),$$
	using Proposition \ref{derivata} (iii).
\end{proof}

\begin{rem}\label{remarca metrica} The identity is also true for $L^k(\E,\F)$ instead of $L(\E,\F).$
	
\end{rem}

\section{Vector bundles over a convenient manifold}

For $x\in\mathbb E$ the \textit{kinematic tangent vector} with foot point $x$ is the pair $(x,X),$  $X\in \mathbb E.$ The space $T_x\E=\mathbb E$ of kinematic tangent vectors with foot point $x$ consists of all derivatives $c^{'}(0)$
of the smooth curves $c:\RR\rightarrow\E$ with $c(0)=x.$ For a convenient smooth mapping $f:\E\rightarrow\F$ the \textit{kinematic tangent mapping} at $x$ is defined by
$$T_xf: T_x\mathbb E\rightarrow T_{f(x)}\F,\hspace{0,2cm}T_xf(x,X):=(f(x),d_xf(X)).$$
If $M$ is a convenient smooth manifold on the set
$$\bigcup_{\alpha\in A}\mathcal U_{\alpha}\times\E_{\alpha}\times \{\alpha\},$$
we consider the equivalence relation
$$(p,v,\alpha)\sim (q,w,\beta) \iff p=q, \hspace{0,2cm}\mathrm{and}\hspace{0,2cm}d_{\f_{\raisebox{-2pt}{\tiny $\beta$}}(p)}(\f_{\alpha\beta})w=v$$
and denote the quotient set by $TM,$ the \textit{kinematic tangent bundle} of $M.$
We define $\pi_M:TM\rightarrow M$ by $\pi_M([p,v,\alpha])=p$ and $T\mathcal U_{\alpha}:=\pi_M^{-1}(\mathcal U_{\alpha})\subset TM.$ The mapping $Tu_{\alpha}:T\mathcal U_{\alpha}\rightarrow u_{\alpha}(\mathcal U_{\alpha})\times \E_{\alpha}$ defined by
$$Tu_{\alpha}([p,w,\beta])=(u_{\alpha}(p), d_{u_{\raisebox{-2pt}{\tiny $\beta$}}(p)}(u_{\alpha\beta})w )$$
is giving a chart for an atlas $(T\mathcal U_{\alpha}, Tu_{\alpha})_{\alpha\in A}$ of $TM.$

The set $T_pM:=\pi^{-1}_M(p)$ is called the \textit{fiber} over $p$ of the tangent bundle. It carries a canonical convenient vector space structure induced by
$$T_pu_{\alpha}:=Tu_{\alpha}|_{T_pM}: T_pM\rightarrow\{p\}\times\E_{\alpha}\cong \E_{\alpha},$$
for $p\in \mathcal U_{\alpha}.$ For connected convenient manifolds, e.g. $\DiffS$,  the fiber of the tangent bundle coincides with the modelling space. The same observation holds, in particular, for the Lie algebra of a connected Lie group.

The kinematic tangent bundle can be also defined as the quotient of the space $C^ {\infty}(\RR, M)$ by the equivalence relation:  $c_1\sim c_2 \iff c_1(0)=c_2(0)$ and in each chart $(\mathcal U_{\alpha},u_{\alpha})$ with $c_1(0)=c_2(0)\in \mathcal U_{\alpha}$ we have $ (u_{\alpha}\circ c_1)^{'}(0)= (u_{\alpha}\circ c_2)^{'}(0).$ In this way any curve $c\in \mathrm{C}^ {\infty}(\RR, M)$ corresponds to the kinematic tangent vector $[c(0),(u_{\alpha}\circ c)^{'}(0),\alpha ].$ For a convenient smooth mapping $f:M\rightarrow N$ the tangent mapping $Tf$ will send the equivalence class $[c]$ in the equivalence class $[f\circ c]$ and its local representative with respect to some charts is the kinematic tangent mapping of the local representative of $f.$

\begin{rem}On convenient vector spaces another kind of tangent vectors are available: the operational tangent vectors, see Section 28.1 in \cite{Michor}. The two notions will not coincide in general and will give two different tangent bundles of a convenient manifold. This difference causes some headaches and leads to the existence of $12$ different notions of differential forms in the convenient setting. The "right" notion for a convenient manifold is the kinematic tangent bundle, the other one does not even preserve products or there exist  no vertical lifts.  Anyway, for manifolds modelled on nuclear Fr\'{e}chet spaces the two notions coincide and one recovers the result from the finite dimensional case: any tangent vector is a derivation.
\end{rem}

In the convenient setting it's worth reducing everything to curves, thus the convenient vector space  $C^\infty(\RR,\,\mathbb E)$ will play a central role. First of all, the space of $\E$-valued functions on a manifold $M$ will be endowed with the initial topology with respect to the mappings
$$C^\infty(M,\mathbb E)\xrightarrow{(i_{\mathcal U_\alpha})^*}C^\infty (\mathcal U_\alpha, \mathbb E)\xrightarrow{(u_\alpha^{-1})^*}C^\infty(U_\alpha,\mathbb E)\xrightarrow{c^*} C^\infty(\RR,\mathbb E) $$

The space of smooth sections of $TM$  is endowed with the topology given by the closed embedding
$$\Gamma(TM)\to \prod_\alpha C^\infty (\mathcal U_\alpha , \mathbb E)$$
$$s\to (pr_2\circ \psi_\alpha \circ s|_{\mathcal U_\alpha})_\alpha$$
which is actually the initial topology with respect to the family of mappings

$$\Gamma(TM)\xrightarrow{\left(i_{\mathcal  U_\alpha}\right)^* }\Gamma(TM|_{\mathcal U_\alpha})\xrightarrow{(pr_{\raisebox{-1pt}{\tiny 2}}\circ \psi_\alpha)_*} C^\infty (\mathcal U_\alpha, \mathbb E)\xrightarrow{(u_\alpha^{-1})^*}C^\infty(U_\alpha,\mathbb E)\xrightarrow{c^*} C^\infty(\RR,\mathbb E)   $$
where $i_{\mathcal U_\alpha}$ is the restriction to $\mathcal U_\alpha$ and $\psi_\alpha$ is a local trivialization of $TM.$

 Let now $\pi: E\rightarrow M$ be a convenient smooth mapping between convenient smooth manifolds. By a \textit{vector bundle chart} on $(E,\pi, M, \F)$ we mean a pair $(\mathcal U,\p),$ where $\mathcal U$ is an open subset in $M,$ and where $\p$ is a fiber respecting diffeomorphism $\psi:E_{\mathcal U}:=\pi^{-1}(\mathcal U)\rightarrow \mathcal U\times \E$ such that
$$pr_1\circ \psi =\pi, $$ where $\E$ is a fixed convenient vector space, called the \textit{standard fiber}.

Two vector bundle charts $(\mathcal U_{\alpha},\p_{\alpha}), (\mathcal U_{\beta},\p_{\beta})$ are called \textit{compatible}, if the mapping $\p_1\circ\p_2^{-1}$ is a fiber linear isomorphism
$$\p_{\alpha}\circ\p_{\beta}^{-1}(p,v)=(p, \p_{\alpha\beta}(p)v), \quad\quad v\in \E,$$
for some mapping $\p_{\alpha\beta}: \mathcal U_{\alpha\beta}:= \mathcal U_{\alpha}\cap \mathcal U_{\beta}\rightarrow GL(\E).$ The mapping is then unique and convenient smooth into $L(\E,\E),$ and is called the \textit{transition function} between the two vector bundle charts.

\begin{rem}\label{remarca Lang99}Compare this definition with the definition of a  Banach vector bundle, presented in \cite{Lang99}. An extension of a result known for  Banach spaces (\cite{Lang99} Proposition III.1.1, \cite{Hideki} Theorem 5.3) holds
	if the mapping $f: U\times \E\rightarrow \F$ is a convenient smooth  and  linear in the second argument, then the mapping of $U$ into $L(\E,\F),$ $x\mapsto f(x,\cdot),$ is a convenient smooth mapping. The converse  also holds, by Proposition \ref{cartesian closedness}, since $L(\E,\F)\subset \mathrm{C}^{\infty}(\E,\F) $ is initial. In \cite{Lang99} the author has omitted the veracity of the result for infinite dimensional Banach spaces  and thus the conditions VB1 and VB2, in Chapter III,  are enough to define a Banach vector bundle. With VB1 and VB2 in Definition III.1 of \cite{Lang99} we obtain the above formulation for  Banach manifolds.   

\end{rem}

If $( E, \pi, M, \E)$ is a convenient smooth vector bundle with a vector bundle atlas $(\f_{\alpha}, \pi^{-1}(\mathcal U_{\alpha}))_{\alpha\in A},$ then we define the dual vector bundle
$$E^{'}:=\bigcup_{p\in M}E_p^{'},$$ with the standard fiber the bornological dual $\E^{'}$ and the transition functions
$$\psi_{\alpha\beta}(p):=(\f_{\beta\alpha}(p))^t,$$ 
naturally obtained using the transpose mapping relative to the bornological duals.
For two convenient smooth vector bundles  $( E, \pi_1, M, \E)$ and $( F, \pi_2, M, \F )$  with $( \pi_1^{-1}(\mathcal U_{\alpha}), \f_{\alpha})_{\alpha\in A_1},$ and $( \pi_2^{-1}(\mathcal V_{\alpha}), \phi_{\alpha})_{\alpha\in A_2}$ the corresponding vector bundle atlases, one can construct another vector bundle over $M,$ the Hom-bundle
$$L(E,F):=\bigcup_{p\in M}L(E_p, F_p),$$ having the standard fiber the convenient vector space  $L(\E,\F).$ 
The transition functions are
$$\psi_{\alpha\beta}(p)(T):= \phi_{ \alpha\beta}(p)\circ T\circ \f_{\alpha\beta}^{-1}(p),\hspace{0,5cm}T\in L(\E, \F).$$ 

With this terminology we have $E^{'}:=L(E,M\times\RR).$ We are ready to  define now the \textit{kinematic cotangent bundle} $T^{'}M,$ having the transition functions
$$\psi_{\alpha\beta}(p):=T_{\f_{\alpha}(p)}(\f_{\beta}\circ \f_{\alpha}^{-1})^t\in GL(\E^{'})\subset L(\E^{'},\E^{'}).$$

If we use the G\^ateaux smoothness  to define a manifold modelled by locally convex spaces then we can not define differential forms as G\^ateaux smooth sections of a vector bundle, see \cite{Neeb} for a discussion. This is the case because for non normable locally convex spaces the evaluation mapping: $$\mathrm{ev}:\E\times \E^{'}\rightarrow\RR,$$ is not continuous for any linear topology on $\E^{'},$ by a theorem of B. Maissen  \cite{Maissen}.

In the convenient setting a kinematic differential $k$-form is defined as a convenient smooth section of the vector bundle $L^k_{alt}(TM, M\times\RR):$
$$\Omega^k(M):=\Gamma(L^k_{alt}(TM, M\times\RR)),$$
with the modelling space $L^k_{alt}(\E,\RR),$ the space of bounded $k$-linear alternating mappings, where $\E$ is the modelling space of $M.$ This construction is the only one which is invariant under Lie derivatives, pullbacks or exterior derivatives. There are a lot of other candidates but all have major drawbacks, see Section 33 of \cite{Michor} for a discussion.

\begin{rem}\label{naturalpairing}The reason why this construction is possible is the following: the evaluation mapping $\mathrm{ev}:\E\times \E^{'}\rightarrow\RR$ is always convenient smooth, thus continuous relative to the $\cinf$-topology on $\cinf$ $ (\E\times\E^{'}).$ But $\cinf$ $(\E\times\E^{'})$ is not a topological vector spaces in general (if $\E$ is not normable), and thus $\mathrm{ev}$ is not continuous relative to some linear topology on the space $\E^{'},$ to avoid any contradiction with  Maissen's theorem. 
\end{rem}

The space $\Omega^k(M)$ carries the structure of a convenient vector space, induced by the closed embedding
$$\Omega^k(M)\to \prod_{\alpha} C^\infty (\mathcal U_\alpha, L^k_{alt}(\E,\RR)),$$
$$\omega \mapsto  pr_2\circ \psi_\alpha\circ (\omega|_{\mathcal U_\alpha}), $$
 having the initial topology induced by the mappings
$$\hspace{-0,25cm}\Gamma(L^k_{alt}(TM, M\times\RR))\xrightarrow{(pr_{\raisebox{-1pt}{\tiny 2}}\circ \psi_\alpha)_*\circ \left(i_{\mathcal  U_\alpha}\right)^* }C^\infty (\mathcal U_\alpha, L^k_{alt}(\E,\RR))\xrightarrow{c^*\circ(u_\alpha^{-1})^*}C^\infty(\RR,L^k_{alt}(\E,\RR))   $$
where $u_\alpha:\mathcal U_\alpha\to \E$ is a smooth atlas of $M$, $\psi_\alpha$ the induced vector bundle chart, a similar construction to the one for $\Gamma(TM).$
In the same manner the space $\Omega^k(M,V)$,  of differential forms with values in a convenient vector space $V$, becomes a convenient vector space.

\begin{rem}
 The following mappings are convenient smooth:
 $$\ \ d:\Omega^k(M)\to \Omega^{k+1}(M) \ \ \  \text{(exterior differentiation operator)}$$
 $$i:\Gamma(TM)\times\Omega^k(M)\to \Omega^{k-1}(M)\ \ \ \ \ \ \text{(insertion operator)}$$
 $$f^*:\Omega^k(M)\to \Omega^{k}(N)\hspace{2,5cm} \text{(pullback operator)}$$

\end{rem}

	\medskip
	% The data information below will be filled by AIMS editorial staff
	Received xxxx 20xx; revised xxxx 20xx.
	\medskip

\end{document}